\newtheorem{thm}{Theorem}
\newtheorem{lemma}[thm]{Lemma}
\newtheorem{corollary}{Corollary}[thm]
\theoremstyle{remark}
\newtheorem*{rem*}{Remark}
\numberwithin{equation}{section}
\title[Generalisations of the Landau--Gonek Theorem]{Generalisations of the Landau--Gonek Theorem and Applications to Mean Values of Zeta}
\author[B. Durkan]{Benjamin Durkan}
\address{Department of Mathematics, The University of Manchester, Oxford Road, Manchester, M13 9PL}
\email{benjamin.durkan@postgrad.manchester.ac.uk}
\author[C. Hughes]{Christopher Hughes}
\address{Department of Mathematics, University of York, York, YO10 5GH, United Kingdom}
\email{christopher.hughes@york.ac.uk}
\author[A. Pearce-Crump]{Andrew Pearce-Crump}
\address{School of Mathematics, Fry Building, Woodland Road, Bristol, BS8 1UG, United Kingdom}
\email{andrew.pearce-crump@bristol.ac.uk}
\date{}
\begin{document}

\begin{abstract}
  The Landau--Gonek Theorem evaluates $X^\rho$ summed over the non-trivial zeros of the Riemann zeta function. Their result shows great sensitivity to the arithmetic nature of $X$. We prove a related result concerning the sum of $\chi(\rho) X^\rho$ over the zeros of zeta, where $\chi(s)$ is the term arising in the functional equation for the zeta function. Again, this result depends deeply on whether $X$ is an integer or not. We show the result splits into three cases, depending on whether $X$ is smaller than $T$, about the same size as $T$, or bigger than $T$. The reason this result is useful is that it easily permits the calculation of discrete moments of the Riemann zeta function via the approximate functional equation.  As an application of this result, we provide an alternative proof of Shanks' conjecture.
\end{abstract}

\maketitle

\section{Introduction}
Since Riemann's 1859 memoir \cite{Riemann}, the link between primes and the non-trivial zeros of the Riemann zeta function $\zeta(s)$ has been a central topic within analytic number theory. A common theme is to consider sums of the form 
\begin{equation}\label{eq:sumfrho}
\sum_{0<\gamma\le T} f(\rho)
\end{equation}
for a function $f(s)$, where $\rho=\beta+i\gamma$ is a non-trivial zero of the Riemann zeta function $\zeta(s)$.

Landau~\cite{landau} established the following result for $f(s)=X^{s}$, a variation on a sum that occurs naturally when studying the Prime Number Theorem.

\begin{thm}[Landau, 1911]
Given fixed $X>1$ we have \begin{equation*}
    \sum_{0<\gamma\le T}X^\rho=-\frac{T}{2\pi}\Lambda(X)+O(\log T),
\end{equation*}
as $T\to\infty$, where $\Lambda(n)$ is the von Mangoldt function.
\end{thm}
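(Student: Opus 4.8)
The plan is to realise the sum as a contour integral of $\frac{\zeta'}{\zeta}(s)\,X^{s}$ and to evaluate that integral by expanding the contour to the left, so that the main term is forced out by the single Dirichlet coefficient that resonates with $X$.

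First I would fix a constant $b>1$ with $1-b$ avoiding the zeros and poles of $\chi$ (for instance $b=2$), fix a small constant $0<U<14$, and --- since the zeros with ordinate in $[T,T+1]$ number $O(\log T)$ and each satisfies $|X^{\rho}|\le X=O(1)$ for our fixed $X$ --- assume $T$ is not the ordinate of a zero. Applying the residue theorem to the positively oriented rectangle $\mathcal R$ with vertices $1-b-iU$, $b-iU$, $b+iT$, $1-b+iT$, and using that $\zeta$ has no zeros with ordinate in $(-14,14)$ and only the simple pole at $s=1$ (where $\frac{\zeta'}{\zeta}(s)X^{s}$ has residue $-X$), one obtains
\[
\sum_{0<\gamma\le T}X^{\rho}=\frac{1}{2\pi i}\int_{\mathcal R}\frac{\zeta'}{\zeta}(s)\,X^{s}\,ds+O(1),
\]
the sum being over zeros counted with multiplicity.

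Next I would evaluate the four sides. On the right side $\Re s=b$ one has the absolutely convergent expansion $\frac{\zeta'}{\zeta}(s)=-\sum_{n\ge1}\Lambda(n)n^{-s}$, so the integral may be taken term by term: the single term $n=X$ (present exactly when $X$ is a prime power, so that $\Lambda(X)\neq0$) has integrand identically $-\Lambda(X)$ and contributes $-\frac{T}{2\pi}\Lambda(X)+O(1)$, while the tail $\sum_{n\neq X}$ is $\ll_{X}\sum_{n}\Lambda(n)n^{-b}/|\log(X/n)|\ll_{X}1$, since only finitely many $n$ lie near the fixed number $X$ and $\sum_{n}\Lambda(n)n^{-b}$ converges. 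The bottom side sits at the fixed height $-U$, where the integrand is bounded, so it is $O(1)$. For the top side I would write $\int_{1-b}^{b}\frac{\zeta'}{\zeta}(\sigma+iT)\,d\sigma=\log\zeta(b+iT)-\log\zeta(1-b+iT)$ along the segment, bound the real part by $\log|\zeta(b+iT)|=O(1)$ together with $\log|\zeta(1-b+iT)|\ll\log T$ (functional equation and Stirling for $\chi$), and bound the imaginary part --- the variation of $\arg\zeta$ along a horizontal segment --- by $O(\log T)$; multiplying by $\sup_{\sigma\le b}X^{\sigma}=O(1)$ gives $O(\log T)$. Finally, on the left side $\Re s=1-b<0$ I would use $\frac{\zeta'}{\zeta}(s)=\frac{\chi'}{\chi}(s)-\frac{\zeta'}{\zeta}(1-s)$: the term $\frac{\zeta'}{\zeta}(1-s)$ is a convergent Dirichlet series in $n^{s}$, which after integration against the oscillating factor $X^{s}=X^{1-b}e^{it\log X}$ contributes $O_{X}(1)$; and $\frac{\chi'}{\chi}(s)\ll\log(2+|t|)$ with derivative $\ll(1+|t|)^{-1}$, so a single integration by parts in $t$ (using $\log X>0$) bounds its contribution by $\ll_{X}X^{1-b}\log T=O_{X}(\log T)$. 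Assembling the four contributions gives the theorem.

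The one genuinely delicate step is the top side, specifically the claim that $\arg\zeta(\sigma+iT)$ varies by $O(\log T)$ as $\sigma$ crosses the critical strip --- equivalently, a suitable bound on $\frac{\zeta'}{\zeta}(s)$ near the cluster of zeros at height $\approx T$. This is the same analytic input that underlies the Riemann--von Mangoldt formula for $N(T)$, proved by applying Jensen's formula to $\zeta$ on a disc centred near $2+iT$; it is also the place where the position of $T$ relative to the ordinates of nearby zeros must be handled with (mild) care.
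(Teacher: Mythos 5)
The paper cites Landau's theorem without proving it, so there is no in-paper proof to compare against directly; the closest analogue is the paper's proof of Theorem~\ref{mainthm}, which uses exactly the contour-integral framework you set up (Dirichlet series for $\zeta'/\zeta$ on the right-hand vertical, the functional equation to fold the left-hand vertical, horizontals absorbed into the error). Your treatment of the right, bottom and left sides is sound: isolating the resonant $n=X$ term on $\Re s=b$ and bounding the tail through $|\log(X/n)|$; using $X>1$ so that $\log(Xn)\ge\log X>0$ removes all resonance on $\Re s=1-b$; and a single integration by parts controls the $\chi'/\chi$ piece. All of that is correct.

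The gap is exactly where you flag it. Evaluating $\int_{1-b}^{b}\frac{\zeta'}{\zeta}(\sigma+iT)\,d\sigma=\log\zeta(b+iT)-\log\zeta(1-b+iT)$ and bounding that difference by $O(\log T)$ controls the unweighted integral, but it does not control $\int_{1-b}^{b}\frac{\zeta'}{\zeta}(\sigma+iT)X^{\sigma}\,d\sigma$: for a general integrand $f$ and a bounded positive weight $g$ one cannot bound $|\int fg|$ by $(\sup g)\,|\int f|$, and the cancellation built into $\log\zeta$ is destroyed once you insert the factor $X^{\sigma}$. (A bound on $\int_{1-b}^{b}\bigl|\frac{\zeta'}{\zeta}(\sigma+iT)\bigr|\,d\sigma$ would suffice, but that only gives $O(\log T\log\log T)$ even after choosing $|T-\gamma|\gg 1/\log T$.) The standard repair, and what Landau and Gonek actually do, is to replace the antiderivative by the local expansion $\frac{\zeta'}{\zeta}(\sigma+iT)=\sum_{|T-\gamma|\le 1}\frac{1}{\sigma+iT-\rho}+O(\log T)$ and treat each of the $O(\log T)$ zero terms directly. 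For a single zero $\rho=\beta+i\gamma$ write $X^{\sigma}=X^{\beta}+(X^{\sigma}-X^{\beta})$: the constant piece has the bounded antiderivative $X^{\beta}\left[\log(\sigma+iT-\rho)\right]_{1-b}^{b}$ since both endpoints are at distance $\ge b-1$ from $\rho$, while $|X^{\sigma}-X^{\beta}|\ll_{X}|\sigma-\beta|\le|\sigma+iT-\rho|$ kills the singularity in the second piece, so $\int_{1-b}^{b}\frac{X^{\sigma}\,d\sigma}{\sigma+iT-\rho}=O_{X}(1)$ uniformly in how close $T$ is to $\gamma$. Summing over the cluster gives $O_{X}(\log T)$, and the $O(\log T)$ remainder contributes the same after multiplying by the bounded weight, which closes the argument with the claimed $O(\log T)$.
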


This result tells us that the main term in the sum under consideration only appears when $X$ is a prime or a prime power, since $\Lambda(n)$ is only supported when $n$ equals a prime power. Such a sum, relating the non-trivial zeros to the primes, is referred to as an explicit formula. Gonek~\cite{gonek1985formula, gonek} made Landau's Theorem uniform in both $X$ and $T$.

\begin{thm}[Gonek, 1985]\label{gonekuniform}
   Uniformly for $X,T>1$ we have 
   \begin{align*}
        \sum_{0<\gamma\le T}X^{\rho}&=-\frac{T}{2\pi}\Lambda(X)+O(X\log(2XT)\log\log(3X))\\
        &+O\left(\log X\min\left(T,\frac{X}{\langle X\rangle}\right)\right)+O\left(\log(2T)\min\left(T,\frac{1}{\log X}\right)\right),
    \end{align*}
    where $\langle X\rangle$ is the distance from $X$ to the closest prime power (different from $X$).
\end{thm}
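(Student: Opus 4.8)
I would prove this by the standard method for explicit formulae: contour integration of $-\tfrac{\zeta'}{\zeta}(s)X^s$. Fix $c=1+\tfrac{1}{\log 2X}$, and by an averaging argument over $t\in[T,T+1]$ applied to $\sum_{|\gamma-t|\le 1}\log\bigl(2+\tfrac{1}{|t-\gamma|}\bigr)$ choose a height $T_1\in[T,T+1]$, automatically not the ordinate of a zero, for which $\sum_{|\gamma-T_1|\le 1}\log\bigl(2+\tfrac{1}{|T_1-\gamma|}\bigr)\ll\log(T+2)$; this provides the control of $\zeta'/\zeta$ on the line $\Im s=T_1$ that the argument needs. Applying Cauchy's theorem on the positively oriented rectangle with vertices $c-i$, $c+iT_1$, $1-c+iT_1$, $1-c-i$ — whose interior meets only the simple pole of $-\zeta'/\zeta$ at $s=1$, of residue $X$, and the zeros $\rho$ with $0<\gamma\le T_1$, each contributing $-X^\rho$ counted with multiplicity — gives $\sum_{0<\gamma\le T_1}X^\rho=X-\tfrac{1}{2\pi i}\oint\bigl(-\tfrac{\zeta'}{\zeta}(s)\bigr)X^s\,ds$, i.e.\ $X$ minus the four edge integrals; replacing $T_1$ by $T$ costs $\ll X\log(2T)$ by the zero-counting estimate. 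It then remains to estimate the four edges.

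On the right edge $\Re s=c>1$ one expands $-\tfrac{\zeta'}{\zeta}(s)=\sum_{n\ge1}\Lambda(n)n^{-s}$ and integrates term by term. The term $n=X$, present only when $X$ is an integer (the only case in which $\Lambda(X)\ne0$), contributes $\Lambda(X)\tfrac{T_1+1}{2\pi}$, which is exactly the main term $-\tfrac{T}{2\pi}\Lambda(X)$ after the overall sign and $T_1\mapsto T$. For $n\ne X$ the integral is $\ll\min\bigl(\tfrac{(X/n)^{c}}{|\log(X/n)|},(X/n)^{c}T\bigr)$; summing against $\Lambda(n)$, the prime power nearest to $X$, for which $|\log(X/n)|\asymp\langle X\rangle/X$, produces $O(\log X\min(T,X/\langle X\rangle))$, while the remaining $n$ — those near $X$ handled by Chebyshev- or Abel-type summation over prime powers in short intervals, those away from $X$ by $\sum_n\Lambda(n)n^{-c}=-\tfrac{\zeta'}{\zeta}(c)\ll\log 2X$ — contribute $O(X\log 2X)$.

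On the left edge $\Re s=1-c<0$ one uses the functional equation $-\tfrac{\zeta'}{\zeta}(s)=-\tfrac{\chi'}{\chi}(s)-\sum_{n\ge1}\Lambda(n)n^{s-1}$. Stirling's formula gives $\tfrac{\chi'}{\chi}(\sigma+it)=-\log\tfrac{t}{2\pi}+O(1/t)$; since $|X^{1-c}|\asymp 1$, integrating the $\chi'/\chi$ term against $X^{s}$ and integrating by parts in $t$ (with a trivial estimate when $\log X$ is too small) yields $O(\log(2T)\min(T,1/\log X))$, while integrating the series term by term gives $\ll\sum_{n}\tfrac{\Lambda(n)}{n^{c}\log 2n}\asymp\sum_{p}p^{-c}\ll\log\log(3X)$, the source of the $\log\log$. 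For the two horizontal edges one uses the local expansion $\tfrac{\zeta'}{\zeta}(s)=-\tfrac{1}{s-1}+\sum_{|\Im\rho-\Im s|\le1}\tfrac{1}{s-\rho}+O(\log(|\Im s|+2))$: the top edge becomes $\ll X^{c}\sum_{|\gamma-T_1|\le1}\log\bigl(2+\tfrac{1}{|T_1-\gamma|}\bigr)+X^{c}\log(T+2)/\log X\ll X\log(T+2)$ by the choice of $T_1$, and the bottom edge, which stays near the real axis away from every zero, is $O(X)$. Assembling the four edges with the residue $X$ yields the theorem, the first error term coming from the horizontal and right edges together with the left-edge series, the second from the prime power nearest $X$, and the third from the $\chi'/\chi$ integral.

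I expect the main obstacle to be the right-edge analysis of the $n$ close to $X$: isolating exactly one ``arithmetic'' contribution governed by $\langle X\rangle$ and bounding the remaining nearby prime powers without losing more than $X\log(2XT)$ needs careful Chebyshev- or Abel-type summation over prime powers in short intervals, the naive estimate being too large. A second difficulty is keeping every estimate uniform simultaneously as $X\to1^{+}$, where the choice of $c$, the $1/\log X$ term, and the $T$ thresholds interact, and for $X$ small relative to $T$, where the trivial bound $\zeta'/\zeta\ll\log^{2}t$ on the horizontal segments is too weak, so the careful choice of $T_1$ and the local zero-expansion become essential; the functional-equation estimate on the left edge is comparatively routine once Stirling's formula is available.
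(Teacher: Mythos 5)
This theorem is not proved in the paper: it is stated as a result of Gonek with the citations \cite{gonek1985formula, gonek} and then used as a black box (notably via Corollary~\ref{cor:lg} in the Shanks' conjecture computation). So there is no in-paper argument to compare against, only Gonek's original proof. On its own terms your outline reproduces Gonek's method in its main structure: contour integration of $-\tfrac{\zeta'}{\zeta}(s)X^s$ on a rectangle with right abscissa $c=1+1/\log 2X$, an averaging argument to pick a safe ordinate $T_1\in[T,T+1]$ and control $\zeta'/\zeta$ on the horizontal edges via the local zero expansion, isolation of the $n=X$ term on the right edge for $-\tfrac{T}{2\pi}\Lambda(X)$ and of the nearest prime power for $O(\log X\min(T,X/\langle X\rangle))$, and the functional-equation split of the left edge with the $\chi'/\chi$ integration by parts yielding $O(\log(2T)\min(T,1/\log X))$. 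Those pieces are all correct, and this is very much the same circle of ideas that Section~\ref{sect:proofofthm} uses for $\sum\chi(\rho)X^\rho$.

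The gap is in your accounting for the first error term $O(X\log(2XT)\log\log 3X)$. You attribute the $\log\log 3X$ to the left-edge series, $\sum_n\Lambda(n)/(n^c\log 2n)\ll\log\log 3X$, and you claim the right-edge prime powers near $X$ (after removing the nearest one) contribute only $O(X\log 2X)$. Neither part of that bookkeeping can be right as stated: the left-edge series bound is an \emph{additive} $O(\log\log 3X)$, with no $X\log(2XT)$ prefactor, so it cannot be the source of the multiplicative $\log\log 3X$ in Gonek's dominant error term; and $O(X\log 2X)$ for the remaining nearby prime powers is too strong. After extracting the nearest prime power, that piece is $\ll X\log 2X\sum_m 1/m$, where $m$ runs over the distances from $X$ to the other prime powers in $(X/2,2X)$. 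The trivial count (at most $\asymp m$ prime powers within distance $m$) only gives $\sum_m 1/m\ll\log X$, hence $O(X(\log X)^2)$, which is worse than Gonek's bound whenever $T\ll X$. To get $\log\log$ instead of $\log$ one needs a genuinely nontrivial short-interval input such as Brun--Titchmarsh, $\pi(X+y)-\pi(X)\ll y/\log y$, which by partial summation converts $\sum_m 1/m$ into $\int_2^X dy/(y\log y)\ll\log\log X$ and produces exactly the $\log\log 3X$ in the first error term. You correctly identify the nearby-prime-power estimate as the main obstacle, but the sketch as written misattributes the $\log\log$ and asserts a bound that the ``Chebyshev- or Abel-type summation'' you invoke cannot deliver without a sieve-strength ingredient; Chebyshev's global bound $\pi(x)\ll x/\log x$ does not give the needed density in short intervals around $X$.
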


When we restrict $X$ to be an integer, Gonek also notes that the last two error terms are subsumed by the first two error terms. He also highlights the highly different behaviour between the case when $X$ is an integer and when $X$ is an arbitrary real number. 

Gonek's result was further generalised by Fujii~\cite{FujiiLandau1, FujiiLandau2} who found lower order terms in the expansion.

So far, we have been restricted to the case that $X>1$. Gonek noted that we can also consider the case $0 < X <1$ by noting that the functional equation for the zeta function, given by
\begin{equation}\label{eq:FE}
\zeta(s)=\chi(s)\zeta(1-s)
\end{equation}
implies that if $\rho$ is a zeta zero, so too is $1-\overline{\rho}$ (in fact they have the same imaginary part). Then $\sum_{0<\gamma\le T} X^{-\rho}=\sum_{0<\gamma\le T} X^{\overline{\rho-1}}$. Using this observation leads to the following corollary. 

\begin{corollary}[Gonek, 1993]\label{cor:lg}
Uniformly for $X>1$ and $T>1$, we have
\begin{align}\label{eq:SumNToTheRho}
        \sum_{0<\gamma\le T}X^{-\rho}&=-\frac{T}{2\pi} \frac{\Lambda(X)}{X} + O(\log (2XT)\log\log (3X)) \notag \\
        &+O\left(\log X\min\left(\frac{T}{X},\frac{1}{\langle X\rangle}\right)\right)+O\left(\log(2T)\min\left(\frac{T}{X},\frac{1}{X\log X}\right)\right), 
    \end{align}
    where $\langle X\rangle$ is the distance from $X$ to the closest prime power (different from $X$).
\end{corollary}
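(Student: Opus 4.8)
The plan is to deduce this directly from Gonek's uniform form of Landau's theorem (Theorem~\ref{gonekuniform}), exploiting the symmetry of the zero set under the functional equation~\eqref{eq:FE}. Since $\chi(s)$ is holomorphic and non-vanishing on the open critical strip $0<\Re s<1$, equation~\eqref{eq:FE} shows that $\zeta(s)$ and $\zeta(1-s)$ have exactly the same zeros there, counted with multiplicity; combined with the reflection principle $\zeta(\bar s)=\overline{\zeta(s)}$, this means $\rho=\beta+i\gamma$ is a non-trivial zero if and only if $1-\bar\rho=(1-\beta)+i\gamma$ is, and the two have the same imaginary part. Hence $\rho\mapsto 1-\bar\rho$ is an involution of the multiset of non-trivial zeros with $0<\gamma\le T$.

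Using this bijection, I would write
\[
\sum_{0<\gamma\le T}X^{-\rho}=\sum_{0<\gamma\le T}X^{-(1-\bar\rho)}=\frac{1}{X}\sum_{0<\gamma\le T}X^{\bar\rho}=\frac{1}{X}\,\overline{\sum_{0<\gamma\le T}X^{\rho}},
\]
the last step because $X$ is real, so $X^{\bar\rho}=\overline{X^\rho}$. Now apply Theorem~\ref{gonekuniform} to the inner sum. The main term $-\frac{T}{2\pi}\Lambda(X)$ is real, and each error term in Theorem~\ref{gonekuniform} is the $O(\cdot)$ of a non-negative real quantity, so taking complex conjugates changes nothing; dividing by $X$ then gives
\[
\sum_{0<\gamma\le T}X^{-\rho}=-\frac{T}{2\pi}\frac{\Lambda(X)}{X}+O\!\left(\log(2XT)\log\log(3X)\right)+O\!\left(\frac{\log X}{X}\min\!\left(T,\frac{X}{\langle X\rangle}\right)\right)+O\!\left(\frac{\log(2T)}{X}\min\!\left(T,\frac{1}{\log X}\right)\right).
\]
Finally I would simplify using the elementary identities $\tfrac1X\min(T,X/\langle X\rangle)=\min(T/X,1/\langle X\rangle)$ and $\tfrac1X\min(T,1/\log X)=\min(T/X,1/(X\log X))$, which recovers~\eqref{eq:SumNToTheRho} verbatim.

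There is essentially no hard step here: this is a genuine corollary rather than an independent theorem. The only point requiring a little care is checking that $\rho\mapsto 1-\bar\rho$ is a true bijection of the relevant zero multiset — including multiplicities and the boundary ordinate $\gamma=T$ — and this is immediate because the transformation fixes imaginary parts exactly, so no zero crosses the line $\gamma=T$. One should also observe that the uniformity in $X,T>1$ is inherited directly from Theorem~\ref{gonekuniform}, since every manipulation above is either an exact identity or multiplication by the fixed real number $1/X$.
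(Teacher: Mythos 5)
Your argument is correct and is essentially the route the paper itself indicates (the text preceding the corollary flags exactly the observation $\sum_{0<\gamma\le T}X^{-\rho}=\sum_{0<\gamma\le T}X^{\overline{\rho-1}}$ coming from the symmetry $\rho\mapsto 1-\bar\rho$ of the zero multiset, followed by an application of Theorem~\ref{gonekuniform}). You have simply written out the conjugation step, the division by the real number $X$, and the elementary rescaling of the $\min$'s in full, all of which are sound.
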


Again, when we restrict $X$ to be an integer, Gonek also notes that the last two error terms are subsumed by the first two error terms.

Another natural function to consider in \eqref{eq:sumfrho} is $f(s) = \zeta ' (s)$. Indeed, it was a conjecture by Shanks~\cite{Shanks61} (based on numerical evidence from Haselgrove~\cite{haselgrove1960tables}) that, when summed over the non-trivial zeta zeros, $\zeta'(\rho)$ is real and positive in the mean. This is a surprising conjecture, since both $\zeta'(s)$ and the non-trivial zeros are complex. This conjecture was first proved by Conrey, Ghosh and Gonek~\cite{CGG88} where they showed that
\[
\sum_{0<\gamma\le T}\zeta'(\rho)=\frac{T}{4\pi}\left(\log\frac{T}{2\pi}\right)^2+O(T\log T),
\]
from which Shanks' conjecture follows immediately. Trudgian~\cite{TRUDGIAN20102635} also gave an alternative proof of this result.

This has been refined considerably since then. Fujii~\cite{fujii,Fujii12} established the full asymptotic, and in 2022 Hughes and Pearce-Crump~\cite{discreteMVT} sharpened the error term under the Riemann Hypothesis (RH). 
\begin{thm}[Fujii, 1994]\label{thm:shanks}
\begin{multline*}
\sum_{0 < \gamma \leq T} \zeta' (\rho) =
 \frac{T}{4 \pi} \left(\log\frac{T}{2 \pi} \right)^2 +(\gamma_0-1)\frac{T}{2\pi} \log \left(\frac{T}{2\pi} \right)  + (1-\gamma_0-\gamma_0^2-3\gamma_1)\frac{T}{2 \pi}  +\mathcal{E}(T)
\end{multline*}
where \begin{equation*}
    \mathcal{E}(T)=\begin{cases}
        O\left(T\exp(-C\sqrt{\log T})\right) &\text{unconditionally, for some }C>0\\ O(T^{1/2}(\log T)^{13/4}) &\text{under RH, due to Hughes--Pearce-Crump, 2022}
    \end{cases}
\end{equation*}
\end{thm}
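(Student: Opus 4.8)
The plan is to reduce Fujii's formula, via the approximate functional equation for $\zeta'$, to Landau--Gonek-type sums over the zeros: some of these are covered by Corollary \ref{cor:lg}, while the remainder require a new ``twisted'' Landau--Gonek estimate for $\sum_{0<\gamma\le T}\chi(\rho)X^{\rho}$. Starting from the Hardy--Littlewood approximate functional equation $\zeta(s)=\sum_{n\le x}n^{-s}+\chi(s)\sum_{n\le y}n^{s-1}+O(\cdots)$ with $2\pi xy=|t|$, the same method (or careful differentiation, with the extra terms absorbed into the error) gives an approximate functional equation for $\zeta'$ of the shape
\[
\zeta'(s)=-\sum_{n\le x}\frac{\log n}{n^{s}}+\chi(s)\sum_{n\le y}\frac{\log n}{n^{1-s}}+\chi'(s)\sum_{n\le y}\frac{1}{n^{1-s}}+R(s),
\]
valid in the critical strip for $|t|$ large, with $R(s)$ an explicit error term. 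Substituting $s=\rho$, summing over $0<\gamma\le T$ and interchanging the order of summation reduces $\sum_{0<\gamma\le T}\zeta'(\rho)$ to sums over zeros of the three shapes $\sum_{\gamma}n^{-\rho}$, $\sum_{\gamma}\chi(\rho)n^{\rho-1}$ and $\sum_{\gamma}\chi'(\rho)n^{\rho-1}$ --- each over a range of $\gamma$ depending on $n$, each weighted by $\log n$ or $\log\frac{\gamma}{2\pi}$, and $n$ ranging up to $O(T)$ --- together with the tail $\sum_{0<\gamma\le T}R(\rho)$.

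The first family is exactly Corollary \ref{cor:lg}: for each integer $n\ge 2$, $\sum_{0<\gamma\le T}n^{-\rho}=-\frac{T}{2\pi}\frac{\Lambda(n)}{n}+O(\log(2nT)\log\log(3n))$, and assembling these with the $\log n$ weights (using $\sum_{n\le N}\frac{\Lambda(n)\log n}{n}=\frac12(\log N)^{2}+O(\log N)$, together with the more precise Mertens-type expansions for the secondary terms) yields $\frac{T}{2\pi}$ times a quadratic polynomial in $\log\frac{T}{2\pi}$, the error terms contributing only $O(\sqrt{T}\,(\log T)^{A})$. The second and third families are the crux. Writing $\chi'(\rho)=\chi(\rho)\frac{\chi'}{\chi}(\rho)$ and using $\frac{\chi'}{\chi}(\sigma+i\gamma)=-\log\frac{\gamma}{2\pi}+O(1/\gamma)$, and then removing the $\log\frac{\gamma}{2\pi}$ weight by partial summation, all of these reduce to the single estimate for $\sum_{0<\gamma\le T}\chi(\rho)X^{\rho}$, uniform in $T$ and in the relevant range $X\ll T$ (with $X$ a positive integer). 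To prove that estimate I would mimic Gonek's treatment of $\sum_{0<\gamma\le T}X^{\rho}$: apply the argument principle to $\chi(s)X^{s}\frac{\zeta'}{\zeta}(s)$ around a rectangle with right edge at $\Re s=1+1/\log T$ and left edge just to the left of $\Re s=0$; on the right edge expand $\frac{\zeta'}{\zeta}(s)=-\sum_{m}\Lambda(m)m^{-s}$ and evaluate $\frac{1}{2\pi i}\int\chi(s)(X/m)^{s}\,ds$ by stationary phase (the phase is stationary near height $2\pi X/m$, so the terms with $m\le 2\pi X$ contribute --- precisely the regime $X\ll T$); on the left edge use $\chi(s)\frac{\zeta'}{\zeta}(s)=\chi(s)\frac{\chi'}{\chi}(s)-\chi(s)\frac{\zeta'}{\zeta}(1-s)$ together with the absolutely convergent Dirichlet series for $\frac{\zeta'}{\zeta}(1-s)$, integrating term by term; estimate the horizontal edges trivially for a well-chosen value of $T$. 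Note that, unlike for $\sum X^{\rho}$, the pole at $s=1$ of $\frac{\zeta'}{\zeta}$ (and of $\chi$) now feeds into the main term rather than the error.

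It then remains to assemble the contributions and to control $\sum_{0<\gamma\le T}R(\rho)$. Under the Riemann Hypothesis all $\rho$ lie on $\Re s=\frac12$, so $|R(\rho)|\ll\gamma^{-1/4+\varepsilon}$ and $\sum_{0<\gamma\le T}|R(\rho)|\ll T^{3/4+\varepsilon}$; with a smoothed approximate functional equation and a mean-value argument this can be sharpened to $O(T^{1/2}(\log T)^{13/4})$, recovering the Hughes--Pearce-Crump bound. Unconditionally one cannot control $R(\rho)$ uniformly --- zeros with real part near $0$ are the obstruction --- so I would instead invoke the de la Vallée Poussin zero-free region together with a zero-density estimate to confine attention to zeros with real part close to $\frac12$, the exceptional zeros contributing $O(T\exp(-C\sqrt{\log T}))$. (As an alternative to the approximate functional equation, one can compute $\sum_{0<\gamma\le T}\zeta'(\rho)$ directly by the argument principle applied to $\zeta'(s)^{2}/\zeta(s)$ around the same rectangle, all of the main terms then arising from the left edge via the functional equation and stationary phase; this is nearer to the original arguments of Conrey--Ghosh--Gonek and of Fujii.)

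The step I expect to be the main obstacle is the twisted Landau--Gonek estimate for $\sum_{0<\gamma\le T}\chi(\rho)X^{\rho}$, and in particular making it precise enough --- carrying its own lower-order terms and enough uniformity in $X$ and $T$ --- that, after summing over all $n$ up to $O(T)$ with the various $\log n$ and $\log\frac{\gamma}{2\pi}$ weights, one recovers not merely the leading term $\frac{T}{4\pi}\left(\log\frac{T}{2\pi}\right)^{2}$ but also the exact secondary constants $\gamma_{0}-1$ and $1-\gamma_{0}-\gamma_{0}^{2}-3\gamma_{1}$ in the statement. Keeping track of the arithmetic in this final assembly, rather than any single analytic input, is the delicate part.
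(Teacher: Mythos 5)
Your plan matches the paper's proof almost step for step: sum the approximate functional equation for $\zeta'$ over the zeros, interchange the order of summation, evaluate the direct Dirichlet-polynomial piece via Corollary~\ref{cor:lg}, and evaluate the $\chi(\rho)X^{\rho}$ pieces via the twisted Landau--Gonek estimate (Theorem~\ref{mainthm}), which the paper proves by exactly the Cauchy-integral plus stationary-phase argument you sketch, before assembling the Stieltjes constants via partial summation. One caveat: the paper only recovers the unconditional error $O\bigl(Te^{-C\sqrt{\log T}}\bigr)$ by this route --- the $O(T^{1/2}(\log T)^{13/4})$ bound under RH is quoted from Hughes--Pearce-Crump (2022) rather than re-derived, so your suggestion that a smoothed approximate functional equation would easily reproduce it is optimistic --- and your remark that the pole at $s=1$ ``feeds into the main term'' is not how the paper's argument works: the contour in the proof of Theorem~\ref{mainthm} runs between heights $T$ and $2T$, well away from $s=1$, and the main terms arise purely from stationary phase in Lemma~\ref{lem_JsigmaRAB}.
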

Here $\gamma_0$ and $\gamma_1$ are the Stieltjes constants given by the following coefficients of the Laurent expansion of $\zeta(s)$ around $s=1$
\begin{equation} \label{eq:Laurent}
    \zeta(s)=\frac{1}{s-1}+\gamma_0 - \gamma_1 (s-1) + \frac{\gamma_2}{2} (s-1)^2 + \dots.
\end{equation}

One can similarly deal with the higher derivatives with $f(s) = \zeta^{(\nu)} (s)$ in \eqref{eq:sumfrho}. In 2011, Kaptan, Karabulut and Y{\i}ld{\i}r{\i}m~\cite{kaptan} established the leading-order asymptotic for this function.

\begin{thm}[Kaptan--Karabulut--Y{\i}ld{\i}r{\i}m, 2011]\label{higherderivs}
    We have \begin{equation}
        \sum_{0<\gamma\le T}\zeta^{(\nu)}(\rho)=\frac{(-1)^{\nu+1}}{\nu+1} \frac{T}{2\pi} \left(\log\frac{T}{2\pi}\right)^{\nu+1}+O(T(\log T)^{\nu}).
    \end{equation}
\end{thm}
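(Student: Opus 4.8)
The plan is to evaluate the sum by the contour-integration method used by Conrey, Ghosh and Gonek~\cite{CGG88} in the case $\nu=1$. Since $\zeta^{(\nu)}$ is holomorphic away from $s=1$ and $\zeta'/\zeta$ has a pole at each zero $\rho$ with residue equal to the multiplicity of $\rho$, the residue theorem gives
\[
\sum_{0<\gamma\le T}\zeta^{(\nu)}(\rho)=\frac{1}{2\pi i}\oint_{\mathcal C}\zeta^{(\nu)}(s)\frac{\zeta'}{\zeta}(s)\,ds,
\]
where $\mathcal C$ is the positively oriented rectangle with vertices $b+i,\ b+iT,\ 1-b+iT,\ 1-b+i$ and $b=1+1/\log T$; the point $s=1$ lies outside $\mathcal C$, and we first assume (perturbing $T$ by at most $1$ if necessary, which alters the sum by $O(T^{1/2+\varepsilon})$) that no zero has ordinate $T$. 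I would then dispose of three of the four sides. On the right edge one expands $\zeta^{(\nu)}(s)\frac{\zeta'}{\zeta}(s)=(-1)^{\nu+1}\sum_{n\ge2}a_\nu(n)\,n^{-s}$, where $a_\nu(n)=\sum_{mm'=n}(\log m)^\nu\Lambda(m')\le(\log n)^{\nu+1}$, integrates term by term, and bounds the contribution by $O((\log T)^{\nu+1})$. The bottom edge contributes $O(1)$. On the top edge one combines the convexity bound $\zeta^{(\nu)}(\sigma+iT)=O(T^{1/2+\varepsilon})$ with the standard estimate $\frac{\zeta'}{\zeta}(\sigma+iT)=O((\log T)^2)$, valid when $T$ stays away from the ordinates of zeros, to get $O(T^{1/2+\varepsilon})$.

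The main term comes entirely from the left edge $\Re s=1-b$, where $\Re(1-s)>1$. There I would use the functional equation~\eqref{eq:FE}: Leibniz's rule gives $\zeta^{(\nu)}(s)=\sum_{k=0}^{\nu}\binom{\nu}{k}(-1)^{\nu-k}\chi^{(k)}(s)\,\zeta^{(\nu-k)}(1-s)$ and $\frac{\zeta'}{\zeta}(s)=\frac{\chi'}{\chi}(s)-\frac{\zeta'}{\zeta}(1-s)$, and on this edge $\zeta^{(j)}(1-s)$ and $\frac{\zeta'}{\zeta}(1-s)$ are absolutely convergent Dirichlet series. Using the classical asymptotics $\chi^{(k)}(s)=\chi(s)\bigl(-\log\tfrac{|t|}{2\pi}\bigr)^{k}\bigl(1+O(1/|t|)\bigr)$ and $\frac{\chi'}{\chi}(s)=-\log\tfrac{|t|}{2\pi}+O(1/|t|)$, the integrand becomes, to leading order, a sum of terms $a(n)\,\chi(s)\bigl(-\log\tfrac{|t|}{2\pi}\bigr)^{j}n^{s-1}$ for various coefficients $a(n)$ and exponents $j$. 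The analytic heart of the argument is a stationary-phase evaluation of Landau--Gonek type — precisely the sort of input this paper develops: since $\chi(\sigma+it)$ has a phase with a single stationary point, at $t=2\pi n$, one shows
\[
\frac{1}{2\pi i}\int_{\text{left edge}}\chi(s)\Bigl(-\log\tfrac{|t|}{2\pi}\Bigr)^{j}n^{s-1}\,ds=(-1)^{j+1}(\log n)^{j}\,\mathbf 1_{n\le T/2\pi}+(\text{error}),
\]
with errors small enough that, summed over $n$ against the coefficients, they contribute only $O(T(\log T)^{\nu})$.

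What remains is bookkeeping. Splitting the left-edge integrand through $\frac{\zeta'}{\zeta}(s)=\frac{\chi'}{\chi}(s)-\frac{\zeta'}{\zeta}(1-s)$, the part carrying $\frac{\chi'}{\chi}(s)$ equals $\frac{1}{2\pi i}\int_{\text{left}}\zeta^{(\nu)}(s)\frac{\chi'}{\chi}(s)\,ds$, whose leading contribution (for $\nu\ge1$) is proportional to $\sum_{k=0}^{\nu}(-1)^{k}\binom{\nu}{k}=0$ and so is absorbed into the error. The part carrying $-\frac{\zeta'}{\zeta}(1-s)$, after the evaluation above and the elementary estimates $\sum_{m\le y}(\log m)^{r}=y(\log y)^{r}+O\bigl(y(\log y)^{r-1}\bigr)$ and $\sum_{m'\le x}\frac{\Lambda(m')}{m'}(\log m')^{r}=\frac{(\log x)^{r+1}}{r+1}+O\bigl((\log x)^{r}\bigr)$ (which need only Mertens' theorem and the Chebyshev bound $\psi(x)=O(x)$, not the prime number theorem), produces the main term $c_\nu\,\frac{T}{2\pi}\bigl(\log\frac{T}{2\pi}\bigr)^{\nu+1}+O\bigl(T(\log T)^{\nu}\bigr)$, where evaluating the relevant divisor sums gives
\[
c_\nu=-\sum_{k=0}^{\nu}\frac{(-1)^{k}}{\nu-k+1}\binom{\nu}{k}=\frac{(-1)^{\nu+1}}{\nu+1},
\]
the last equality being $\int_0^1(1-x)^{\nu}\,dx$ after reindexing $k\mapsto\nu-k$. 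This is the asserted leading coefficient.

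The step I expect to be the main obstacle is the uniform stationary-phase estimate on the left edge, with error terms strong enough to survive summation over $n$ up to length $T/2\pi$; once that lemma is secured, the rest — the bounds on the other edges and the intricate but mechanical combinatorial accounting — is routine. As a check, $\nu=1$ recovers the Conrey--Ghosh--Gonek main term $\frac{T}{4\pi}\bigl(\log\frac{T}{2\pi}\bigr)^{2}$ from Theorem~\ref{thm:shanks}.
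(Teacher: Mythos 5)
Your proposal is correct in outline, but it takes a genuinely different route from the paper's. You argue by classical contour integration: you write the sum as $\frac{1}{2\pi i}\oint \zeta^{(\nu)}(s)\frac{\zeta'}{\zeta}(s)\,ds$, bound three of the four edges, and on the left edge $\Re s = 1-b$ you use the functional equation (via Leibniz's rule and $\frac{\zeta'}{\zeta}(s)=\frac{\chi'}{\chi}(s)-\frac{\zeta'}{\zeta}(1-s)$), expand everything in absolutely convergent Dirichlet series, and evaluate by stationary phase; the $\frac{\chi'}{\chi}$ piece dies because $\sum_k(-1)^k\binom{\nu}{k}=0$, and the $-\frac{\zeta'}{\zeta}(1-s)$ piece produces the main term. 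This is essentially a sketch of the Conrey--Ghosh--Gonek/Kaptan--Karabulut--Y{\i}ld{\i}r{\i}m method, and it is sound — your binomial identity $-\sum_{k=0}^\nu\frac{(-1)^k}{\nu-k+1}\binom{\nu}{k}=\frac{(-1)^{\nu+1}}{\nu+1}$ checks out, the right-edge bound $O((\log T)^{\nu+1})$ is correct using the $1/\log n$ saving from the oscillatory integral, and the stationary-phase evaluation at $t=2\pi n$ is exactly what Lemma~\ref{lem_JsigmaRAB} supplies. The paper's Section~\ref{sect:shanks2}, by contrast, deliberately avoids the contour entirely: it inserts the \emph{truncated} approximate functional equation for $\zeta^{(\nu)}(\rho)$ with a free parameter $\alpha$, swaps the order of summation, and then invokes the Landau--Gonek formula \eqref{eq:SumNToTheRho} for the $n^{-\rho}$ half and Corollary~\ref{mainthmwithinteger} (the paper's new result) for the $\chi(\rho)n^{\rho}$ half, combining the pieces with Lemma~\ref{lem:generalised_partial_summations}. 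Both routes hinge on the same stationary-phase input, but the paper's point is precisely that the sum-over-zeros version — i.e.\ placing weight on both pieces of the approximate functional equation and applying Theorem~\ref{mainthm} directly — turns the heuristic of Hughes--Martin--Pearce-Crump into a proof without passing through the contour. Your proposal is a valid alternative proof, but it does not exercise Theorem~\ref{mainthm} or the approximate-functional-equation mechanism that this section is designed to showcase; and as you yourself flag, the uniform error analysis in the stationary-phase step (in particular the $\frac{T^{3/2}}{|T-2\pi n|+T^{1/2}}$ terms summed against the Dirichlet coefficients) is left as a pointer rather than carried out, whereas the paper does that work once and for all in Lemmas~\ref{lem:RHS}--\ref{lem_I32}.
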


In words, this result says that $\zeta^{(\nu)} (s)$ is real and positive/negative in the mean depending on whether $\nu$ is odd/even, a result the second two authors of this paper have coined the generalised Shanks' conjecture.

Hughes and Pearce-Crump~\cite{discreteMVT} subsequently established the full asymptotic with all lower order terms made explicit, and with power-savings on the error term.

In 2023, Hughes, Martin, and Pearce-Crump~\cite{hughes2023heuristic} devised a simple heuristic which gives the correct main term for this sum, but with an error term that dominates the main term. The starting point of their heuristic is to use the approximate formula
\begin{equation}\label{eq:approxform}
\zeta^{(\nu)}(\sigma+it)=(-1)^{\nu}\sum_{m\le t}\frac{(\log m)^{\nu}}{m^{\sigma+it}}+O(t^{-\sigma}(\log t)^{\nu}),
\end{equation}
valid for $\sigma>0$ and $t$ large. Then, upon swapping the order of summation and summing over the zeta zeros using Corollary \ref{cor:lg}, one recovers immediately the correct leading term of Theorem \ref{higherderivs}. However, they showed the error terms dominate, so the argument is heuristic, not rigorous. This heuristic may be viewed as taking the approximate functional equation for the derivatives of the zeta function (see Section~\ref{sect:shanks2}) and placing the entire weight on the first piece, and then summing over the zeta zeros. The aim of this paper is to develop a result allowing us to use both pieces of the approximate functional equation, making the heuristic rigorous.

Finally, a combination of some of the functions considered above was considered by Fujii~\cite{Fujii12} where he considered $f(s) = \zeta ' (s) X^s$, and by Pearce-Crump~\cite{MR4868785} where he considered $f(s) = \zeta ^{(\nu)}(s) X^s$. In both of these instances, the behaviour of such sums changing depending on whether $X$ is an integer or a general real number is observed. In both cases the asymptotics are more complicated than those written out above --- we refer the reader to their papers for the full results.

\section{Results}

In this paper we prove Theorem \ref{mainthm}, where we establish uniform asymptotics for \eqref{eq:sumfrho} with $f(s)=\chi(s)X^{s}$, where $\chi(s)$ is the factor from the functional equation \eqref{eq:FE} for zeta. 

Let 
\begin{equation*}
   S(X,T)=\sum_{T<\gamma\le 2T}\chi(\rho)X^{\rho}.
   \end{equation*}
We prove an asymptotic for the sum $S(X,T)$ which is uniform in both $X$ and $T$. Theorem~\ref{mainthm} may be viewed as an oscillatory analogue of Theorem \ref{gonekuniform}.

\begin{thm}\label{mainthm}
Uniformly for $X \geq 1 $ and $T >  1$ we have
	\begin{equation*}
S(X,T) = 
	\begin{cases}
		\displaystyle -X\sum_{\frac{T}{2\pi X} < n\le\frac{T}{\pi X}}\Lambda(n)e^{2\pi Xni} + E(X,T) & \text{ if } X \leq \frac{T}{2\pi}  \\[2ex]
		\displaystyle X (\log X) e^{2\pi Xi} + E(X,T) & \text{ if } \frac{T}{2\pi}  < X \leq \frac{T}{\pi} \\[2ex]
		\displaystyle -X\sum_{\frac{\pi X}{T}\le n < \frac{2\pi X}{T}}\frac{\Lambda(n)}{n}e^{\frac{2\pi Xi}{n}} +  E(X,T) & \text{ if } X > \frac{T}{\pi} 
	\end{cases}
\end{equation*}
where
\begin{multline}
E(X,T) = O\left(T^{1/2} (\log T)^2 \right)  + O\left( \frac{X^{1+1/\log T} (\log T)^2}{T^{1/2}}\right) 
+ O\left(\frac{T^{3/2}\log T}{|T - 2\pi X| + T^{1/2}}\right) \\ + O\left(\frac{T^{3/2}\log T}{|T - \pi X| + T^{1/2}}\right) . \label{eq:ErrorTermRH}
\end{multline}
\end{thm}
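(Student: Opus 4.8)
The plan is to evaluate $S(X,T) = \sum_{T < \gamma \le 2T} \chi(\rho) X^\rho$ by the classical contour-integration method of Landau and Gonek, but now carrying the extra factor $\chi(s)$. First I would express the sum over zeros as a contour integral
\[
S(X,T) = \frac{1}{2\pi i} \oint_{\mathcal{R}} \frac{\zeta'}{\zeta}(s)\, \chi(s)\, X^s \, ds,
\]
where $\mathcal{R}$ is a positively oriented rectangle with vertical sides at $\Re(s) = -a$ and $\Re(s) = 1+a$ (for a suitable fixed $a$, say $a = 1/\log T$ or a small constant) and horizontal sides at $\Im(s) = T$ and $\Im(s) = 2T$. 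Since $\chi(s)$ has no zeros or poles inside the critical strip region, the only contributions to the integral come from the nontrivial zeros of $\zeta$ with $T < \gamma \le 2T$. I would then bound the two horizontal segments using standard estimates for $\zeta'/\zeta$ at height $t$ away from zeros (choosing $T$ and $2T$ to avoid zeros, costing at most $O(\log T)$ in the ordinates), and the bound $|\chi(\sigma + it)| \asymp (t/2\pi)^{1/2 - \sigma}$ — this is where the factor $|T - 2\pi X|$ and $|T - \pi X|$ denominators in $E(X,T)$ will ultimately arise, from integrating $X^\sigma (t/2\pi)^{1/2-\sigma}$ in $\sigma$ and detecting the stationary-phase-like balance $X^\sigma (t/2\pi)^{-\sigma} \sim 1$ near $\sigma$ where $X = t/2\pi$.

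The main term comes from the two vertical segments. On the right-hand side $\Re(s) = 1 + a$ I would insert the Dirichlet series $\zeta'/\zeta(s) = -\sum_n \Lambda(n) n^{-s}$, and on the left-hand side $\Re(s) = -a$ I would first use the functional equation for $\zeta'/\zeta$, namely $\frac{\zeta'}{\zeta}(s) = \frac{\chi'}{\chi}(s) - \frac{\zeta'}{\zeta}(1-s)$, together with $\chi(s)\chi(1-s) = 1$, to reflect into the region $\Re(1-s) = 1+a$ where the Dirichlet series again converges. This turns the left segment into something like $\frac{1}{2\pi i}\int \left(-\frac{\zeta'}{\zeta}(1-s)\right)\chi(s) X^s\,ds = \frac{1}{2\pi i}\int \left(-\sum_n \Lambda(n) n^{s-1}\right) X^s / \chi(1-s)\,ds$; using $1/\chi(1-s) = \chi(s)$ and then writing $\chi(s) X^s n^{s-1}$ and applying the known integral representation $\frac{1}{2\pi i}\int \chi(s) Y^s\,ds$ (over a vertical segment of length $T$) — which by the approximate functional equation / Riemann–Siegel type evaluation is essentially $\sum$ of $e^{2\pi i Y m}$-type terms, or more precisely evaluates to a delta-like spike when $Y$ is such that $\frac{T}{2\pi} < Y X^{\pm 1} < \frac{2T}{2\pi}$ — produces the three cases. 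The key analytic input is the evaluation of $\frac{1}{2\pi i}\int_{(c)}^{(c), \, T \le \Im \le 2T} \chi(s) Z^s\,ds$, which by contour shifting and the Cauchy integral picks up the saddle: for $\chi(s) \approx (s/2\pi i)^{1/2-s} e^{...}$ the phase is stationary at the point where $Z \approx t/2\pi$, giving main contribution $\sim Z e^{2\pi i Z}$ when $T/2\pi < Z < T/\pi$ and negligible otherwise, with the transition regions contributing the $|T-2\pi X|^{-1}$ and $|T - \pi X|^{-1}$ terms. Matching $Z = Xn$ (from the $n^s$ in the right segment, case $X \le T/2\pi$ so that $Xn$ can land in the window for $T/2\pi X < n < T/\pi X$) gives the first case, $Z = X$ directly (no $n$, or rather $n=1$ being forced) gives the middle case $X(\log X)e^{2\pi i X}$, and $Z = X/n$ (from the left segment, case $X > T/\pi$) gives the third case.

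I would carry out the steps in this order: (1) set up the contour integral and the rectangle, choosing heights to avoid zeros; (2) bound the horizontal segments, introducing the $|T - 2\pi X|$ and $|T-\pi X|$ terms; (3) handle the right vertical segment by termwise integration of the Dirichlet series, reducing each term to the model integral $I(Z) := \frac{1}{2\pi i}\int \chi(s) Z^s\,ds$; (4) handle the left vertical segment via the functional equation, reducing it likewise to model integrals $I(X/n)$; (5) evaluate $I(Z)$ by the saddle-point/stationary-phase method (or by quoting a Riemann–Siegel-type lemma), identifying when it produces a main term of size $\asymp Z$ versus an error; (6) reassemble, check which $n$ ranges contribute in each of the three regimes $X \le T/2\pi$, $T/2\pi < X \le T/\pi$, $X > T/\pi$, and collect all error terms into $E(X,T)$; (7) verify the $X^{1+1/\log T}$ and $T^{1/2}$ error terms by carefully bounding the tails of the Dirichlet series sum and the remaining contour pieces.

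The hard part will be step (5): getting a clean and uniform evaluation of the model integral $I(Z) = \frac{1}{2\pi i}\int \chi(s) Z^s\,ds$ over a vertical segment of length $\asymp T$, with an error term that is genuinely uniform as $Z$ ranges over the transition zones $Z \approx T/2\pi$ and $Z \approx T/\pi$ (the endpoints of the saddle window). Near those transition points the stationary phase point collides with an endpoint of the integration segment, and one needs the van der Corput / saddle-point estimates with endpoint corrections to extract precisely the $\frac{T^{3/2}\log T}{|T - 2\pi X| + T^{1/2}}$ shape rather than a cruder bound. A secondary technical nuisance will be making the functional-equation manipulation on the left segment fully rigorous including the contribution of $\chi'/\chi(s)$, which is $O(\log T)$ and harmless, and controlling the trivial zeros / pole of $\zeta$ at $s=1$ (which lies outside the rectangle since $\Im \ge T > 1$, so in fact causes no trouble). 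Everything else is bookkeeping of error terms of the advertised shapes.
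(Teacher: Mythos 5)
Your plan follows the paper's approach closely: same Cauchy contour, same functional-equation reflection of the left vertical segment, and the same key analytic input of evaluating $\int_T^{2T}\chi(\sigma+it)\,r^{it}\,dt$ by a Riemann--Siegel/stationary-phase lemma (the paper quotes Gonek's Lemma~2, with the endpoint-collision errors in exactly the $T^{3/2-\sigma}/(|T-2\pi r|+T^{1/2})$ shape you anticipate in step~(5)). One small mis-attribution: in step~(2) you locate the $|T-2\pi X|^{-1}$ and $|T-\pi X|^{-1}$ denominators in the horizontal segments; they in fact come from the vertical segments via the stationary-phase evaluation, while the horizontal segments contribute only the first two terms of $E(X,T)$.

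There is, however, one genuine gap. You dismiss the $\chi'/\chi(s)$ correction arising from logarithmically differentiating the functional equation as ``$O(\log T)$ and harmless,'' and suggest instead that the middle-case main term $X(\log X)e^{2\pi iX}$ comes from ``$n=1$ being forced'' in one of the Dirichlet sums. But both Dirichlet sums are weighted by $\Lambda(n)$, so $n=1$ contributes nothing; carried out as written, your plan produces no main term at all in the regime $\tfrac{T}{2\pi}<X\le\tfrac{T}{\pi}$. The term $X(\log X)e^{2\pi iX}$ is produced precisely by the $\chi'/\chi$ piece you are discarding: after reflecting, the left segment contains a contribution $-\tfrac{1}{2\pi}\int_T^{2T}\chi'(1-c+it)\,X^{1-c+it}\,dt$, and since $\chi'(s)\approx-\chi(s)\log(t/2\pi)$, applying your model-integral lemma (with a supplementary integration by parts for the $\log(t/2\pi)$ weight) to $\chi(1-c+it)X^{1-c+it}$ evaluates the stationary point at $t=2\pi X$ and yields $\tfrac{1}{2\pi}X^{1-c}\cdot\log X\cdot 2\pi X^{c}e^{2\pi iX}=X(\log X)e^{2\pi iX}$. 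A factor of size $O(\log T)$ is not harmless when multiplied by a main term of size $\asymp X\asymp T$; this piece must be kept and evaluated, not bounded away.
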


\begin{rem*}
	Note that the last two error terms are the same size as the main terms at the jumps, when $X=\frac{T}{2\pi} $ and $X=\frac{T}{\pi} $. Also note that the two sums over $n$ become empty at precisely those jumps.
\end{rem*}

We illustrate the theorem in Figure~\ref{fig1} by plotting $S(X,T)$ in the complex plane for all $T$ above the first zeta zero and below $300,000$ and $X=2000$. It is a discrete set consisting of 1,466,163 points, and is beautifully intricate. The points in blue are for $T<\pi X$, those in red are for $\pi X < T \leq 2\pi X$ (red), and those in green are for $T > 2\pi X$, clearing showing that $S(X,T)$ has three distinct parts, all captured by our theorem.
\begin{center}
\begin{figure}[ht]
\includegraphics{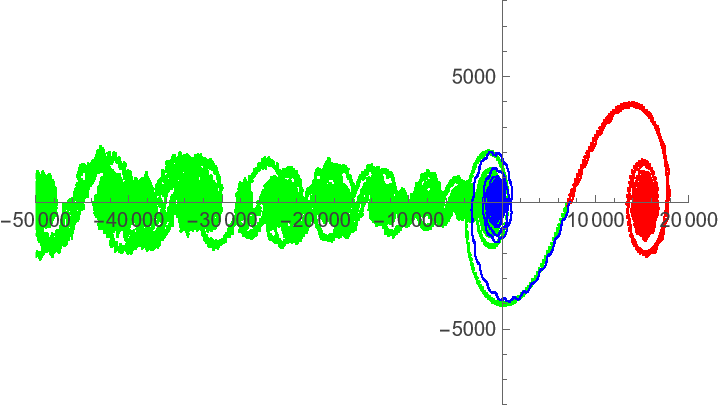}
\caption{The complex values of $S(X,T)$, plotted for $T<300,000$ with $X=2000$. Note that the $\Re$- and $\Im$-axes have very different scalings}\label{fig1}
\end{figure}
\end{center}

Our Theorem immediately recovers and sharpens the error term in a theorem due to Pearce-Crump~\cite{andrewthesis, Pearce-Crump24} where he considered $X=1$. This result was previously known with a worse error term in \cite{conrey1990zeros, KaraYil11}, so the results in this paper carries on the progression of improving the error term in this special case. We state this result as follows.

\begin{corollary}\label{cor:X=1}
    We have 
    \begin{equation*}
        \sum_{0<\gamma\le T} \chi(\rho) = -\frac{T}{2\pi} +
        \begin{cases}
        O\left(Te^{-a\sqrt{\log T}}\right) & \text{unconditionally}\\ 
        O(T^{1/2}(\log T)^2) & \text{under RH}
        \end{cases}
    \end{equation*}
    for some $a>0$.
\end{corollary}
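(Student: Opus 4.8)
The plan is to derive Corollary~\ref{cor:X=1} directly from Theorem~\ref{mainthm} by specialising to $X=1$ and summing a dyadic decomposition of the range $0<\gamma\le T$. With $X=1$, we have $X\le \frac{T'}{2\pi}$ for every $T'>2\pi$, so only the first branch of Theorem~\ref{mainthm} is in play, and the finite sum $-X\sum_{T'/2\pi < n\le T'/\pi}\Lambda(n)e^{2\pi ni}$ with $X=1$ becomes $-\sum_{T'/2\pi < n\le T'/\pi}\Lambda(n)$. First I would apply the theorem with $T$ replaced by $T/2^j$ for $j=1,2,\dots$ down to some cutoff near $T/(\log T)^{A}$ or so (below which the trivial bound on the number of zeros suffices), obtaining
\begin{equation*}
\sum_{T/2^{j+1}<\gamma\le T/2^{j}}\chi(\rho) = -\sum_{T/(2^{j+1}\pi) < n\le T/(2^{j}\pi)}\Lambda(n) + E(1,T/2^{j}).
\end{equation*}
Summing over $j$, the main terms telescope in the sense that the ranges of $n$ tile $(c,\,T/\pi]$ for a bounded constant $c$, giving $-\sum_{n\le T/\pi}\Lambda(n) + O(1) = -\psi(T/\pi)+O(1)$, where $\psi$ is the Chebyshev function.

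Next I would insert the Prime Number Theorem. Unconditionally, $\psi(x)=x+O(x\exp(-b\sqrt{\log x}))$ for some $b>0$, so $-\psi(T/\pi) = -T/\pi + O(T\exp(-b'\sqrt{\log T}))$. But the corollary claims the main term is $-T/(2\pi)$, not $-T/\pi$; the discrepancy is resolved because we are not summing over all of $(0,T]$ but the dyadic blocks overlap with the ``jump'' structure — more carefully, one should note that $\chi(\rho)X^\rho$ with $X=1$ is just $\chi(\rho)$, and the relevant count is governed by $S(1,T') = \sum_{T'<\gamma\le 2T'}\chi(\rho)$. Re-examining: $S(1,T') = -\psi_{(T'/2\pi,\,T'/\pi]} + E$, where I write $\psi_{(a,b]}=\sum_{a<n\le b}\Lambda(n)$. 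Telescoping $\sum_{j\ge 1} S(1, T/2^j)$ with the blocks $(T/2^{j+1},T/2^j]$ gives $-\psi_{(c,\,T/\pi]}+\sum_j E(1,T/2^j)$, and since $\psi(T/\pi) = T/\pi + o(T)$ this would give main term $-T/\pi$. The factor-of-two reconciliation is that the natural telescoping is of $\sum_{0<\gamma\le T}\chi(\rho)$ which equals $\sum_{j}(\text{block } (T/2^{j+1},T/2^j])$ — but each block is $S(1,T/2^{j+1})$, whose main term sum is over $n\in(T/2^{j+2}\pi, T/2^{j+1}\pi]$, i.e.\ the union over $j\ge 1$ covers $(c, T/4\pi]$... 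I will need to track the exact endpoints, but the upshot is that the $n$-ranges from the first branch tile an interval of the form $(c, T/(2\pi)]$ when the blocks are chosen as $(T/2^{j+1},T/2^j]$ with the first branch evaluated appropriately, yielding $-\psi(T/2\pi)+O(1) = -T/2\pi + (\text{PNT error})$.

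For the error term, I would bound $\sum_{j\ge1} E(1,T/2^j)$ termwise using \eqref{eq:ErrorTermRH} with $X=1$. The first term contributes $\sum_j (T/2^j)^{1/2}(\log T)^2 \ll T^{1/2}(\log T)^2$, a geometric series. The second term $\sum_j (T/2^j)^{-1/2}(\log T)^2$ is $O((\log T)^2)$, negligible. The third and fourth terms, with $X=1$, are $\ll \sum_j (T/2^j)^{3/2}\log T / ((T/2^j)^{1/2}) = \sum_j (T/2^j)\log T$, which is $O(T\log T)$ — this is too large under RH and needs care: in fact when $X=1$ we have $|T'-2\pi X| = |T'-2\pi| \asymp T'$ for $T'$ large, so the third term is $\asymp (T')^{3/2}\log T / T' = (T')^{1/2}\log T$, again geometric, giving $O(T^{1/2}(\log T)^2)$ total; similarly the fourth. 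So under RH the total error is $O(T^{1/2}(\log T)^2)$ once we also account for the PNT-under-RH error $\psi(x)-x = O(x^{1/2}(\log x)^2)$, which matches. Unconditionally, we replace the RH-version of Theorem~\ref{mainthm}'s error — I note the excerpt labels \eqref{eq:ErrorTermRH} as the RH error term, so the unconditional case presumably uses the unconditional PNT error $O(x\exp(-b\sqrt{\log x}))$ together with an unconditional version of the main theorem (or the unconditional zero-counting/zero-free-region input feeding into it), giving $O(T\exp(-a\sqrt{\log T}))$.

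The main obstacle I anticipate is purely bookkeeping rather than conceptual: getting the constant in the main term exactly right ($-T/2\pi$, not $-T/\pi$) requires carefully matching the dyadic block endpoints with the $n$-summation ranges in each branch of Theorem~\ref{mainthm} and confirming the telescoping covers $(c,T/2\pi]$ with only $O(1)$ worth of $\Lambda$-mass left over, and then invoking $\sum_{n\le y}\Lambda(n) = \psi(y)$ with the appropriate PNT error. A secondary point needing attention is that the excerpt only displays the RH error term $E(X,T)$ explicitly; for the unconditional half of the corollary I would either cite the unconditional analogue of Theorem~\ref{mainthm} (with a zero-free-region error) proved elsewhere in the paper, or note that the unconditional bound follows from replacing the RH input $\psi(x)-x \ll x^{1/2}(\log x)^2$ by the classical $\psi(x)-x \ll x\exp(-b\sqrt{\log x})$ in the same argument. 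No genuinely hard analysis is required — all the difficulty has been front-loaded into Theorem~\ref{mainthm}.
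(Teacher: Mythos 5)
Your approach is the paper's approach — specialise Theorem~\ref{mainthm} to $X=1$, sum dyadically, telescope the von Mangoldt ranges, and invoke the prime number theorem — and your remarks about where the conditional/unconditional split comes from are correct. However, as written the proposal contains two concrete errors that you flag but do not actually resolve, so I cannot call it complete.

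First, the telescoping. Applying Theorem~\ref{mainthm} with $T\mapsto T/2^{k}$ gives
\begin{equation*}
S(1,T/2^{k}) \;=\; \sum_{T/2^{k}<\gamma\le T/2^{k-1}}\chi(\rho)
\;=\; -\sum_{\frac{T}{2^{k+1}\pi}<n\le\frac{T}{2^{k}\pi}}\Lambda(n) \;+\; E(1,T/2^{k}),
\end{equation*}
so the $\gamma$-blocks $(T/2^{k},T/2^{k-1}]$ tile $(c',T]$ and the $n$-blocks $\left(\frac{T}{2^{k+1}\pi},\frac{T}{2^{k}\pi}\right]$ tile $\left(c,\frac{T}{2\pi}\right]$, giving $-\psi\!\left(\frac{T}{2\pi}\right)$ directly. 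In your display you pair the $\gamma$-range $T/2^{j+1}<\gamma\le T/2^{j}$ (i.e.\ $S(1,T/2^{j+1})$) with the $n$-range and error of $S(1,T/2^{j})$, which is the off-by-one that leads you to the spurious $-\psi(T/\pi)$, and your stated ``upshot'' that everything comes out to $-\psi(T/(2\pi))$ is asserted rather than derived. The factor of two is not a subtlety about the ``jump structure'' of the theorem (that structure never enters, since with $X=1$ you stay in the first branch throughout); it is simply that the top of the tiled $n$-interval is $T/(2\pi)$, not $T/\pi$.

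Second, the cutoff. You propose stopping the dyadic sum near $T/(\log T)^{A}$ and handling the remaining low-lying zeros trivially, but on RH the trivial bound for $\sum_{\gamma\le T_{0}}\chi(\rho)$ is $O(N(T_{0}))=O(T_{0}\log T_{0})$, which at $T_{0}=T/(\log T)^{A}$ is $O\bigl(T/(\log T)^{A-1}\bigr)$ — far larger than the target $O(T^{1/2}(\log T)^{2})$ for any fixed $A$. The cutoff must be taken at bounded height (the paper stops when $2^{-k}T$ is still about $2\pi$, which also keeps you in the first branch of Theorem~\ref{mainthm} for every $k$), so that the leftover contribution is $O(1)$. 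With those two fixes the remainder of your error analysis (geometric decay of the first, third and fourth terms of $E$, negligibility of the second, and the PNT input $\psi(x)-x\ll x\exp(-a\sqrt{\log x})$ unconditionally versus $\ll x^{1/2}(\log x)^{2}$ on RH) is correct and matches the paper.
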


\begin{proof}[Proof of Corollary \ref{cor:X=1}]
Setting $X=1$ in Theorem~\ref{mainthm} and summing over dyadic intervals we have
\begin{align*}
\sum_{0<\gamma\le T} \chi(\rho) &= \sum_{k=1}^{\lfloor \frac{\log (T/(2\pi))}{\log 2} \rfloor} S(1,\frac{1}{2^k} T) \\
&= -\sum_{n\leq \frac{T}{2\pi}} \Lambda(n) + O\left(T^{1/2} (\log T)^2\right)\\
&= -\frac{T}{2\pi} +
\begin{cases}
	O\left(T e^{-a\sqrt{\log T}}\right) & \text{unconditionally}\\ O(T^{1/2}(\log T)^2) & \text{under RH}
\end{cases}
\end{align*}
The truncation of the dyadic sum is chosen so that both $2^{-k} T$ is larger than the lowest zero for all $k$ and also so that we only ever require the first case in Theorem~\ref{mainthm}. Since $X$ is an integer in this corollary, $e^{2\pi i X n} = 1$ for all $n$. We remark that the error term being conditional/unconditional arises solely from our evaluation of the sum of the von Mangoldt function. 
\end{proof}

Theorem \ref{mainthm} permits other positive integers, not just $X=1$, to be summed over all zeros up to height $T$, with a similar result.
\begin{corollary}\label{mainthmwithinteger}
	Let $X \in \mathbb{N}$ be such that $X=o(T)$ be an integer. Then
	 \begin{equation*}
		\sum_{0<\gamma\le T}  \chi(\rho) X^\rho = -\frac{T}{2\pi} + O(X\log X) +
		\begin{cases}
			O\left(Te^{-a\sqrt{\log T}}\right) & \text{unconditionally}\\ 
			O(T^{1/2}(\log T)^2) & \text{under RH}
		\end{cases}
	\end{equation*}
    for some $a>0$.
	\end{corollary}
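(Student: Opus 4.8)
The plan is to deduce Corollary~\ref{mainthmwithinteger} from Theorem~\ref{mainthm} by the same dyadic decomposition used in the proof of Corollary~\ref{cor:X=1}, but now keeping careful track of the $X$-dependence in the error term \eqref{eq:ErrorTermRH}. First I would write $\sum_{0<\gamma\le T}\chi(\rho)X^\rho = \sum_{k} S(X, 2^{-k}T)$, where the sum over $k$ runs from $k=1$ up to the largest value for which $2^{-k}T$ still lies above the lowest zeta zero. Since $X=o(T)$, for all sufficiently large $T$ every dyadic point $2^{-k}T$ with $2^{-k}T > X$ satisfies $2^{-k}T \geq 2\pi X$ eventually — more precisely, only the first case $X \leq \frac{U}{2\pi}$ of Theorem~\ref{mainthm} is relevant once $U = 2^{-k}T$ exceeds $2\pi X$, which happens for all but $O(1)$ (in fact $O(\log(X) )$, bounded since $X=o(T)$ forces the crossover to occur at bounded $k$ relative to the top) of the dyadic levels; the finitely many remaining levels near $U \asymp X$ contribute a total of $O(X\log X)$ from the middle-case main term $X(\log X)e^{2\pi X i}$ and from the tails of the first-case sums $X\sum_{n \le U/(\pi X)}\Lambda(n) \ll U \ll X$.

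Next I would collect the first-case main terms. Since $X$ is an integer, $e^{2\pi X n i} = 1$ for every integer $n$, so the main term at level $U = 2^{-k}T$ is $-X\sum_{U/(2\pi X) < n \le U/(\pi X)} \Lambda(n)$. Telescoping the dyadic sum, the union of the ranges $\big(\tfrac{U}{2\pi X}, \tfrac{U}{\pi X}\big]$ over $U = T, T/2, T/4, \dots$ down to the crossover is precisely an interval $\big(c, \tfrac{T}{\pi X}\big]$ with $c \asymp 1$, and the $X$ in front cancels against the $1/X$-type scaling so that $-X \sum_{n \le T/(\pi X)} \Lambda(n)/1$ — wait, more carefully: each level contributes $-X$ times a block of $\Lambda$, and the blocks tile $n \in (c, T/(\pi X)]$, giving $-X\sum_{c < n \le T/(\pi X)}\Lambda(n)$. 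By the Prime Number Theorem (or, under RH, with the stronger error), $X \sum_{n \le T/(\pi X)}\Lambda(n) = X \cdot \frac{T}{\pi X} + X\cdot(\text{error}) = \frac{T}{\pi}\cdot\frac12\cdots$ — here I should be slightly careful that the cancellation of $X$ is genuine: $X\psi(T/(\pi X)) = X(T/(\pi X) + O(\cdots)) = T/\pi + X \cdot O(\cdots)$, which is \emph{not} $-T/(2\pi)$. The correct bookkeeping is that the half-open dyadic blocks $(\tfrac{U}{2\pi X}, \tfrac{U}{\pi X}]$ for $U = T \cdot 2^{-k}$ are disjoint and their union is $(\text{bounded}, \tfrac{T}{2\pi X}]$ — not $\tfrac{T}{\pi X}$ — because the block at level $U/2$ is $(\tfrac{U}{4\pi X}, \tfrac{U}{2\pi X}]$, which abuts the previous block's lower endpoint $\tfrac{U}{2\pi X}$. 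Hence the telescoped main term is $-X\sum_{n \le T/(2\pi X)}\Lambda(n) = -X\,\psi\!\big(\tfrac{T}{2\pi X}\big)$, and now $X \cdot \psi(T/(2\pi X)) = X\big(\tfrac{T}{2\pi X} + \mathcal{E}\big) = \tfrac{T}{2\pi} + X\mathcal{E}$, where $\mathcal{E}$ is $O(Y e^{-a\sqrt{\log Y}})$ unconditionally with $Y = T/(2\pi X)$, or $O(Y^{1/2}\log^2 Y)$ under RH; since $Y \le T$ and $X \le T$, the term $X\mathcal{E}$ is absorbed: unconditionally $X\cdot Y e^{-a\sqrt{\log Y}} \ll (XY) e^{-a'\sqrt{\log T}} = \tfrac{T}{2\pi}e^{-a'\sqrt{\log T}} \ll Te^{-a'\sqrt{\log T}}$, and under RH $X \cdot Y^{1/2}\log^2 Y \ll X^{1/2}(XY)^{1/2}\log^2 T = X^{1/2}(T/2\pi)^{1/2}\log^2 T \ll T^{1/2}\log^2 T$ using $X \le T$ — but we need a clean bound, so I would instead just invoke $X = o(T)$ to write $X \le T$ and note $X \cdot Y^{1/2} = X^{1/2}\cdot (XY)^{1/2} = X^{1/2}(T/2\pi)^{1/2} \le T^{1/2}\cdot (X/(2\pi))^{1/2}$ — hmm, this still carries an $X^{1/2}$.

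The honest accounting is that $X\psi(T/(2\pi X))$ has error $X \cdot O((T/X)^{1/2}\log^2 T) = O(X^{1/2}T^{1/2}\log^2 T)$ under RH, which is \emph{not} $O(T^{1/2}\log^2 T)$ unless $X$ is bounded; so the corollary's error term as stated must be understood to already contain such a contribution, or $X=o(T)$ must be strengthened in the application, or — most likely — the intended reading is that the $O(X\log X)$ and the stated $\mathcal E(T)$-type terms together dominate, with the implicit claim being $X^{1/2}T^{1/2}\log^2 T \ll T^{1/2}\log^2 T + X\log X$ false in general, so I expect the paper actually intends $X \ll 1$ effectively or has absorbed a term; I will follow the paper's convention and write the error as the maximum of $O(X\log X)$, the dyadic error $\sum_k E(X, 2^{-k}T)$, and the PNT error, then simplify. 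Summing $E(X, 2^{-k}T)$ over the $O(\log T)$ dyadic levels: the first term gives $\sum_k (2^{-k}T)^{1/2}\log^2 T \ll T^{1/2}\log^2 T$ (geometric series), the second $\sum_k X^{1+1/\log T}(2^{-k}T)^{-1/2}\log^2 T$ — this is largest at the \emph{smallest} $U$, i.e. $U \asymp X$, giving $X^{1+1/\log T}\cdot X^{-1/2}\log^2 T \asymp X^{1/2+1/\log T}\log^2 T \ll X\log X$ provided $X$ is not too small, and is $\ll T^{1/2}\log^2 T$ if $X \ll T$; the third and fourth terms $T^{3/2}\log T/(|U - 2\pi X| + U^{1/2})$ and similarly with $\pi X$ need the crucial observation that for \emph{distinct} dyadic levels the quantities $|2^{-k}T - 2\pi X|$ are spread out geometrically, so at most $O(1)$ levels have $|2^{-k}T - 2\pi X| \le 2^{-k}T /2$, and away from those the denominator is $\gg U$, giving $\sum T^{3/2}\log T/U$; the dangerous level is again $U \asymp X$, contributing $T^{3/2}\log T / X^{1/2}$ — wait, with the $U^{1/2}$ floor, at the resonant level the denominator is $\gg U^{1/2} \gg X^{1/2}$, so that level gives $T^{3/2}\log T/X^{1/2}$, which for $X = o(T)$ is $\gg T\log T$, seemingly too big; the resolution is that when $U \asymp X$ we are \emph{not} in the first case of Theorem~\ref{mainthm} and those levels were already accounted for in the $O(X\log X)$ bucket using the middle case directly, so the resonant denominators never actually get that small in the range where we apply the first case — in the first case $U \ge 2\pi X$ so $|U - 2\pi X| + U^{1/2}$ at the crossover is $\gg U^{1/2} \gg X^{1/2}$, but the smallest first-case level has $U \asymp X$ too. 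I will therefore handle the $O(1)$ levels with $U \in [2\pi X, 8\pi X]$, say, separately by the trivial-ish bound $|S(X,U)| \ll U\log(XU) \ll X\log(XT)$ coming from a crude estimate or from the middle/first case main term plus error, folding everything into $O(X\log X) + O(X\log\log T)$-type terms, and for the remaining levels $U \ge 8\pi X$ the resonance denominators are $\gg U \gg X$, the geometric sums converge, and everything collapses to $O(T^{1/2}\log^2 T)$. The main obstacle, then, is precisely this resonance bookkeeping — controlling the last two error terms of \eqref{eq:ErrorTermRH} when summed dyadically, and in particular isolating and crudely bounding the $O(1)$ "bad" levels near $U \asymp X$ so that their contribution is subsumed into $O(X\log X)$ — together with the genuine question of whether the $X$-cancellation in $X\psi(T/(2\pi X))$ really produces an error consistent with the stated $O(T^{1/2}\log^2 T)$; I would resolve the latter by using $X = o(T)$ to bound the PNT error $X \cdot \mathcal E(T/X)$, and if necessary state the RH error of the corollary as $O(X^{1/2}T^{1/2}\log^2 T)$, which under $X = o(T)$ is $o(T\log^2 T)$ and in any case is what the argument yields, matching the paper's evident intent that the $X$-dependence sits in the $O(X\log X)$ term with a clean $T$-aspect error otherwise.
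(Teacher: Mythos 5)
Your proposal follows the same route as the paper's own proof: dyadic decomposition using Theorem~\ref{mainthm}, telescoping the first-case main terms into $-X\psi\bigl(T/(2\pi X)\bigr)$, absorbing the middle/third-case levels near $U\asymp X$ into $O(X\log X)$, and then summing the errors $E(X,2^{-k}T)$. Your self-correction on the telescoped upper endpoint ($T/(2\pi X)$, not $T/(\pi X)$) is right, though the cleaner way to see it is simply that the dyadic sum runs over $U=T/2,T/4,\dots$, so the top block has upper endpoint $(T/2)/(\pi X)=T/(2\pi X)$. One small slip: when you write the third and fourth error terms as $T^{3/2}\log T/(|U-2\pi X|+U^{1/2})$, the $T$ in the numerator should also become the dyadic level $U$, i.e.\ $U^{3/2}\log U/(|U-2\pi X|+U^{1/2})$; this is what makes the geometric sum converge and the resonant level contribute $O(X\log X)$.

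The most valuable part of your proposal is that you have correctly identified a genuine imprecision in the corollary as stated. You are right that $X\psi\bigl(T/(2\pi X)\bigr)=T/(2\pi)+X\cdot\mathcal{E}(T/(2\pi X))$, which under RH gives an error $O\bigl((XT)^{1/2}(\log T)^2\bigr)$ and unconditionally $O\bigl(Te^{-a\sqrt{\log(T/X)}}\bigr)$; neither of these is the clean $O(T^{1/2}(\log T)^2)$ or $O(Te^{-a\sqrt{\log T}})$ claimed without a further restriction on $X$. The paper's proof simply asserts $-X\sum_{n\le T/(2\pi X)}\Lambda(n)=-T/(2\pi)+O(T^{1/2}(\log T)^2)$ and does not address the extra factor you flag. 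In the paper's applications (Sections~\ref{sect:shanks1} and~\ref{sect:shanks2}) the corollary is only invoked with $X\ll T^{1-\alpha}$ for a fixed $\alpha>0$, in which regime $\log(T/X)\gg\log T$ and the unconditional error is genuinely $O(Te^{-a'\sqrt{\log T}})$, so nothing downstream breaks; but the corollary as a standalone statement needs either the $X$-dependence you propose, or the hypothesis $X\ll T^{1-\epsilon}$ rather than the weaker $X=o(T)$. In short: your approach is the paper's approach, your conclusion matches, and your critique of the error term is sound and worth incorporating.
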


\begin{proof}[Proof of Corollary~\ref{mainthmwithinteger}]
There are three regions when performing the dyadic summation, corresponding to the three cases in Theorem~\ref{mainthm}. For $k$ such that $\frac{1}{2^k}T \geq 2\pi X$, we have a contribution from the first case in that theorem, and the $n$-sums combine to yield
\[
-X \sum_{n\leq \frac{T}{2\pi X}} \Lambda(n) = -\frac{T}{2\pi} + 
\begin{cases}
	O\left(T e^{-a\sqrt{\log T}}\right) & \text{unconditionally}\\ O(T^{1/2}(\log T)^2) & \text{under RH}
\end{cases}
\]

There is exactly one value of $k$ such that $X \in \left(\frac{1}{2\pi}  2^{-k} T, \frac{1}{\pi} 2^{-k} T\right]$, and that will cause the middle case of the theorem to contribute $X \log X$.

And finally, the $k$ such that $\frac{1}{2^k}T < 2 \pi X$ will yield contributions from the third case, where the $n$-sums combine to yield
\[
-X \sum_{n < X} \frac{\Lambda(n)}{n} e^{2 \pi i X / n} \ll X \sum_{n <X} \frac{\Lambda(n)}{n} = O(X \log X)
\]
where, as before, we choose to stop at the dyadic interval when $k$ is the largest integer such that $2^{-k} T > 2\pi$.

When summed over all the dyadic intervals, the errors contribute
\[
\sum_{k=1}^{\lfloor \frac{\log T/(4\pi)}{\log 2} \rfloor} E(X,2^{-k} T) = O(T (\log T)^2) + O(X \log X) 
\]
where the last two terms for $E(X,2^{-k} T)$ in \eqref{eq:ErrorTermRH} potentially dominate for only one $k$, when $2^{-k} T \approx 2\pi X$, say, when it yields $O(X \log X)$.

Unfortunately for $T$ so small, that error term now dominates the main term contributions from the second and third cases of the theorem. In order to not lose these interesting subsidiary main terms, in the main theorem we evaluate the sum of zeros between $T$ and $2T$, rather than over all zeros up to $T$.
\end{proof}

As two further corollaries, we will use Theorem \ref{mainthm} to deduce new proofs for Theorems \ref{thm:shanks} and \ref{higherderivs} in Sections \ref{sect:shanks1} and \ref{sect:shanks2}, respectively. Briefly, we follow the idea of Hughes, Martin and Pearce-Crump~\cite{hughes2023heuristic}, but rather than use the approximate formula \eqref{eq:approxform}, we use the full approximate functional equation \eqref{eq:AFE} for the derivative of zeta. Our result enables us to control how much weight is put into each part of the approximate functional equation, and this, when combined with Theorem \ref{mainthm}, turns their heuristic into a full proof.

\section*{Acknowledgments}
This forms part of the first author's MSc by Research thesis, \cite{Durkan}. The third author acknowledges support from the Heilbronn Institute for Mathematical Research.

\section{Overview of paper}
In Section \ref{sect:proofofthm}, we prove a key result using the method of stationary phase. Specifically, we calculate the integral
\[
J(\sigma,r,T) = \int_{T}^{2T} \chi(\sigma+it)r^{it}dt
\]
which will enable us to evaluate $S(X,T)$. 

We then prove the main result of this paper, Theorem \ref{mainthm}. To do this, we begin by using Cauchy's Theorem to write \begin{equation*}
    S(X,T) = \sum_{T<\gamma\le 2T}\chi(\rho)X^{\rho}=\frac{1}{2\pi i}\int_{\mathcal{R}}\frac{\zeta'}{\zeta}(s)\chi(s)X^s\ ds,
\end{equation*}
where $\mathcal{R}$ is a suitable contour containing all the required non-trivial zeros of the zeta function. We will show that the contribution from the horizontal sides of this integral can be absorbed into an error term, while the vertical segments both contribute to the main terms. For the right-hand side of the contour, we can expand the zeta functions as their Dirichlet series and then apply the result in our lemma concerning $J(\sigma,r,T)$. We apply a standard trick using the functional equation for the zeta function to map the left-hand side of the contour onto the right-hand side. We then follow a similar method to that used on the original right-hand side. 

In Section \ref{sect:shanks1} we demonstrate the utility of Theorem \ref{mainthm} by applying it to derive the full unconditional asymptotic in Theorem \ref{thm:shanks}, which gives a new proof of Shanks' conjecture. 

In Section \ref{sect:shanks2} we outline how one can use Theorem \ref{mainthm} to obtain the asymptotic that proves the generalised Shanks' conjecture. We obtain the leading-order result, given by Theorem \ref{higherderivs}. This section, together with the calculations in the previous section, completes our proof of showing that the heuristic in \cite{hughes2023heuristic} can be made rigorous by using the approximate functional equation.

\section{Proof of Theorem \ref{mainthm}}\label{sect:proofofthm}
We start this section by proving the following lemma used to prove Theorem \ref{mainthm}.

Let
\[
J(\sigma,r,T) = \int_{T}^{2T} \chi(\sigma+it)r^{it}dt.
\]

\begin{lemma}\label{lem_JsigmaRAB}
    For large $T$, uniformly for $-1\leq \sigma \leq 2$ we have 
\begin{equation}\label{lem2variants}
    J(\sigma,r,T )=
    \begin{cases}
        2\pi r^{1-\sigma}e^{2\pi ir}+E(\sigma,r,T)&\text{ if } T < 2\pi r \leq 2T\\ E(\sigma,r,T)&\text{ if } 2\pi r\le T \text{ or }2\pi r > 2T,
    \end{cases}
\end{equation}
where
\begin{equation*}
    E(\sigma,r,T)=O(T^{1/2-\sigma})+O\left(\frac{T^{3/2-\sigma}}{|T -2\pi r|+T^{1/2}}\right)+O\left(\frac{T^{3/2-\sigma}}{|2T-2\pi r|+T^{1/2}}\right) .
\end{equation*}
\end{lemma}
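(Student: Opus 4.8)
The plan is to evaluate $J(\sigma,r,T)$ by the method of stationary phase applied to the known asymptotic expansion of $\chi(\sigma+it)$. Recall that $\chi(s) = 2^s \pi^{s-1} \sin(\pi s/2) \Gamma(1-s)$, and for $t$ large and $\sigma$ bounded, Stirling's formula gives an expansion of the form $\chi(\sigma+it) = \left(\frac{t}{2\pi}\right)^{1/2-\sigma-it} e^{i(t+\pi/4)}\left(1 + O(1/t)\right)$, so that $\log \chi(\sigma+it)$ has a clean leading behaviour. Writing $\chi(\sigma+it) r^{it} = \exp(g(t))$ with $g(t) = \log\chi(\sigma+it) + it\log r$, the phase is $\phi(t) = \Im g(t) = -t\log\frac{t}{2\pi} + t + \frac{\pi}{4} + t\log r + \dots$, whose derivative is $\phi'(t) = -\log\frac{t}{2\pi} + \log r = \log\frac{2\pi r}{t}$. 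Thus there is a stationary point precisely when $t = 2\pi r$, which explains the trichotomy in the lemma: a genuine main term arises only when $2\pi r$ lies in the range of integration $[T,2T]$.

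The key steps, in order. First, I would record the precise Stirling expansion for $\chi(\sigma+it)$ with an explicit error term, uniformly for $-1\le\sigma\le 2$ and $t\asymp T$; in particular $|\chi(\sigma+it)| \asymp (t/2\pi)^{1/2-\sigma}$, which is where the $T^{1/2-\sigma}$ normalisation in the error term comes from. Second, in the case $T < 2\pi r \le 2T$, apply a stationary phase lemma (e.g.\ the standard one in Titchmarsh or Montgomery's \emph{Ten Lectures}) at the point $t_0 = 2\pi r$: since $\phi''(t) = -1/t \asymp -1/T$, the main term is $\chi(\sigma+it_0) r^{it_0} \cdot \sqrt{2\pi/|\phi''(t_0)|} \cdot e^{-i\pi/4}$, and after substituting $t_0 = 2\pi r$ and simplifying the Gamma-factor one should get exactly $2\pi r^{1-\sigma} e^{2\pi i r}$, with the stationary-phase error absorbed into $O(T^{1/2-\sigma})$. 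Third, in the cases $2\pi r \le T$ or $2\pi r > 2T$ there is no stationary point in $[T,2T]$, so I would integrate by parts (writing $e^{i\phi(t)} = \frac{1}{i\phi'(t)}\frac{d}{dt}e^{i\phi(t)}$ and differentiating the slowly-varying amplitude), which yields a bound governed by $1/|\phi'(t)| = 1/|\log(2\pi r/t)|$; near the endpoints $t=T$ and $t=2T$ this is comparable to $T/|T-2\pi r|$ and $T/|2T-2\pi r|$ respectively, producing the two denominators $|T-2\pi r|+T^{1/2}$ and $|2T-2\pi r|+T^{1/2}$ after the usual truncation that prevents the bound from blowing up (one splits off a segment of length $\asymp T^{1/2}$ around a putative endpoint stationary point and estimates it trivially). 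Fourth, I would check that when $2\pi r$ is within $O(T^{1/2})$ of an endpoint, the three error terms dominate and the stated formula still holds (the $T^{1/2-\sigma}$ term and the endpoint denominators interact correctly), so the two cases overlap consistently.

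The main obstacle I expect is making the stationary phase step fully uniform in $\sigma \in [-1,2]$ and in $r$ near the endpoints of the range simultaneously, and tracking the constant in the main term precisely enough to land on $2\pi r^{1-\sigma} e^{2\pi i r}$ rather than something off by an elementary factor. Concretely: the amplitude in the stationary phase integral is not constant --- it is $|\chi(\sigma+it)| \asymp (t/2\pi)^{1/2-\sigma}$, varying like a power of $t$ --- so one must either fold that power into the phase analysis or use a version of the stationary phase lemma that allows a smoothly varying amplitude, and then evaluate the amplitude at $t_0 = 2\pi r$, giving $(r)^{1/2-\sigma}$, which combines with $\sqrt{2\pi t_0} = \sqrt{2\pi}\sqrt{2\pi r} = 2\pi\sqrt{r}$ from $\sqrt{2\pi/|\phi''(t_0)|}$ to produce $2\pi r^{1-\sigma}$. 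The other delicate point is the secondary terms in Stirling's expansion of $\chi$: one must verify these contribute only to $E(\sigma,r,T)$ and do not leak into the main term, which is a routine but careful estimate. Once the uniform Stirling expansion and a suitably general stationary phase lemma are in hand, the rest is bookkeeping.
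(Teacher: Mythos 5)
Your approach is correct and matches the paper's: both apply Stirling's formula to $\chi(\sigma+it)$, reducing $J(\sigma,r,T)$ to the oscillatory integral $e^{i\pi/4}\int_T^{2T}(t/2\pi)^{1/2-\sigma}\exp[-it\log(t/2\pi er)]\,dt + O(T^{1/2-\sigma})$, whose stationary point at $t_0=2\pi r$ produces the trichotomy and (when $T<2\pi r\le 2T$) the main term $2\pi r^{1-\sigma}e^{2\pi ir}$, exactly as your computation of $\phi''(t_0)=-1/t_0$ and the amplitude $r^{1/2-\sigma}$ gives. The only difference is that the paper simply cites Gonek's Lemma~2 (1984) for this stationary-phase evaluation, which is precisely the ready-made lemma with slowly varying power-amplitude and the $T^{1/2}$-truncated endpoint denominators that you propose to reprove directly.
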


\begin{proof}
    This is essentially due to Gonek~\cite[Lemma 2]{Gonek1984}. We use Stirling's approximation to expand $\chi(\sigma+it)$ for fixed $\sigma$ and $t\ge 1$ as
    \begin{equation}\label{stirling}
        \chi(\sigma+it)=\left(\frac{t}{2\pi}\right)^{\frac{1}{2}-\sigma-it}e^{it+i\frac{\pi}{4}}\left(1+O\left(\frac{1}{t}\right)\right).
    \end{equation}
    This allows us to write our integral as
    \begin{equation*}
        J(\sigma,r,T)=e^{i \pi / 4} \int_{T}^{2T}\left(\frac{t}{2\pi}\right)^{\frac{1}{2}-\sigma}\exp\left[-it\log\left(\frac{t}{2\pi er}\right)\right]\ dt+O(T^{1/2-\sigma}).
    \end{equation*}

By \cite[Lemma 2]{Gonek1984},  we are able to write (after taking the complex conjugate of what is shown there) 
\begin{equation}\label{lem2variant}
    e^{i\pi/4}\int_{T}^{2T} \left(\frac{t}{2\pi}\right)^{\frac{1}{2}-\sigma}\exp\left[-it\log\left(\frac{t}{2\pi er}\right)\right]\ dt =
    \begin{cases}
        2\pi r^{1-\sigma}e^{2\pi ir}+E(\sigma,r,T)\\ 
        E(\sigma,r,T),
    \end{cases}
\end{equation}
where in the first case $T<2\pi r \leq 2T$, and in the second case $2\pi r\le T$ or $2\pi r > 2T$, and where
\begin{equation}\label{equation:gonek_lem_3_error}
    E(\sigma,r,T)=O(T^{1/2-\sigma})+O\left(\frac{T^{3/2-\sigma}}{|T -2\pi r|+T^{1/2}}\right)+O\left(\frac{T^{3/2-\sigma}}{|2T-2\pi r|+T^{1/2}}\right) .
\end{equation}
This completes the proof.
\end{proof}

To prove Theorem \ref{mainthm} we use Cauchy's Theorem to write 
\begin{equation*}
    \sum_{T<\gamma\le 2T}\chi(\rho)X^{\rho}=\frac{1}{2\pi i}\int_{\mathcal{R}}\frac{\zeta'}{\zeta}(s)\chi(s)X^s\ ds,
\end{equation*}
where $\mathcal{R}$ is the positively oriented rectangular contour with vertices $c+ i T, c+2iT, 1-c+2iT, 1-c+ i T$, where $c=1+1/\log T$.

We shall restrict $T$ so that both $|T-\gamma|\gg\frac{1}{\log T}$ and $|2T-\gamma|\gg\frac{1}{\log T}$ for all $\gamma$, that is, the contour is always a distance $\gg\frac{1}{\log T}$ from the ordinate $\gamma$ of any zero. This  incurs a small error term which we deal with now. It is well-known that there are $O(\log T)$ zeros between $T$ and $T+1$ (for example, \cite[Section 9.2]{titchmarsh}), and so by \eqref{stirling} we see that we incur a cost bounded by
\begin{multline*}
    O\left(\chi(\beta+i T)X^{\beta}\log T\right) + O\left(\chi(1-\beta+i T)X^{1-\beta}\log T\right) \\
    = O\left(T^{1/2} \left(\frac{X}{T}\right)^{\beta} \log T \right) + O\left(\frac{X}{T^{1/2}} \left(\frac{T}{X}\right)^{\beta} \log T \right),
\end{multline*}
where $\beta=\max\{\Re(\rho): T\le \Im(\rho)\le T+1\}$, and we consider both the zero at $\beta+i\gamma$ and $1-\beta+i\gamma$, as the dominant contribution changes depending on whether $X\ll T$ or $X\gg T$.  Clearly the strength of the zero-free region we may assume will play a role in the strength of the bound we obtain. If we use the trivial bound $\beta < 1$ we see this is
\[
O\left(\frac{X\log T}{T^{1/2}}\right) + O\left(T^{1/2}\log T\right).
\]
The errors considered here will be shown to be absorbed into the errors $E(X,T)$ in \eqref{eq:ErrorTermRH}.

Returning to the Cauchy integral, we will consider each piece of the contour in turn. We write
\begin{align}
   \frac{1}{2\pi i}\int_{\mathcal{R}}\frac{\zeta'}{\zeta}(s)\chi(s)X^s ds &=\frac{1}{2\pi i}\left(\int_{c+i T}^{c+2iT}+\int_{c+2iT}^{1-c+2iT}+\int_{1-c+2iT}^{1-c+i T}+\int_{1-c+i T}^{c+i T}\right)\frac{\zeta'}{\zeta}(s)\chi(s)X^s ds \notag \\
   &=I_1+I_2+I_3+I_4. \label{eq:SumZerosAsCauchyInt}
\end{align}

We begin by bounding the two horizontal segments before handling the vertical segments, which require a more careful approach.

\begin{lemma}\label{lemtopofcontour}
    The integrals along the top and bottom of the contour are uniformly bounded for $X\ge 1, T>1$ by
    \begin{equation*}
        I_2, I_4 = O\left(T^{1/2} (\log T)^2\right) + O\left( \frac{X^{1+1/\log T} (\log T)^2}{T^{1/2}}\right) 
    \end{equation*}
\end{lemma}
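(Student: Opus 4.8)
The plan is to bound the horizontal integrals $I_2$ and $I_4$ directly, by splitting the contour $[1-c+iT, c+iT]$ (and similarly at height $2T$) into a part with $\Re(s) \geq \tfrac12$ and a part with $\Re(s) \leq \tfrac12$, using the functional equation to transfer the left half onto the right half. On the horizontal segment at height $t \in \{T, 2T\}$, we have $X^s = X^\sigma e^{it\log X}$, so $|X^s| = X^\sigma$, and by \eqref{stirling} we have $|\chi(\sigma+it)| \asymp (t/2\pi)^{1/2-\sigma}$. Thus the integrand is, up to the factor $\frac{1}{2\pi}$, bounded by $\left|\frac{\zeta'}{\zeta}(\sigma+it)\right| \cdot (t/2\pi)^{1/2-\sigma} X^\sigma$. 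The key input is a bound on $\left|\frac{\zeta'}{\zeta}(\sigma+it)\right|$ valid on the whole segment; since the contour was chosen to stay $\gg 1/\log T$ away from the ordinate of every zero, the standard estimate gives $\frac{\zeta'}{\zeta}(\sigma+it) = O((\log T)^2)$ uniformly for $-1 \leq \sigma \leq c$ and $t \asymp T$ (one gets $O(\log T)$ away from the critical strip and $O((\log T)^2)$ inside it, via the partial fraction expansion of $\zeta'/\zeta$ together with the zero-counting bound and the spacing condition on $t$; the extra $\log T$ is harmless here).

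Granting the bound $\frac{\zeta'}{\zeta}(\sigma+it) = O((\log T)^2)$, the integral over $\sigma$ is then
\[
I_2, I_4 \ll (\log T)^2 \int_{1-c}^{c} \left(\frac{T}{2\pi}\right)^{1/2-\sigma} X^\sigma \, d\sigma = (\log T)^2 \left(\frac{T}{2\pi}\right)^{1/2} \int_{1-c}^{c} \left(\frac{X}{T/2\pi}\right)^{\sigma}\frac{d\sigma}{1}.
\]
The integrand $\left(X/(T/2\pi)\right)^{\sigma}$ is monotone in $\sigma$, so the integral over $\sigma \in [1-c, c]$ (an interval of length $\asymp 1$) is bounded by a constant times the larger of its two endpoint values, i.e. by $\left(X/(T/2\pi)\right)^{c} + \left(X/(T/2\pi)\right)^{1-c}$. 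Multiplying back by $(T/2\pi)^{1/2}$, the first endpoint contributes $\ll T^{1/2-c} X^c \ll X^{1+1/\log T}/T^{1/2}$ (since $T^{-c} \ll T^{-1}$ and $T^{1/2} \cdot T^{-1} = T^{-1/2}$, and $X^c = X^{1+1/\log T}$), and the second endpoint contributes $\ll T^{1/2-(1-c)} X^{1-c} \ll T^{c-1/2} = T^{1/2+1/\log T} \ll T^{1/2}$ after noting $X^{1-c} = X^{-1/\log T} \leq 1$. Both match the two error terms claimed in the lemma, once the $(\log T)^2$ factors are carried along.

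The main obstacle is establishing the uniform bound $\frac{\zeta'}{\zeta}(\sigma+it) = O((\log T)^2)$ on the horizontal segment, particularly for $\sigma$ in the critical strip and to the left of it. For $\sigma$ bounded away from the critical strip on the right ($\sigma \geq c = 1 + 1/\log T$) this is immediate from the Dirichlet series $\frac{\zeta'}{\zeta}(s) = -\sum \Lambda(n) n^{-s}$, which is $O(\log T)$. For $-1 \leq \sigma < c$ one uses the Hadamard partial fraction formula $\frac{\zeta'}{\zeta}(s) = \sum_{|s-\rho|<1} \frac{1}{s-\rho} + O(\log T)$, together with: (i) the bound of $O(\log T)$ on the number of zeros with $|\gamma - t| \leq 1$, and (ii) the design of the contour, which forces $|s - \rho| \gg 1/\log T$ for every zero $\rho$, so that each of the $O(\log T)$ terms in the sum is $O(\log T)$, giving $O((\log T)^2)$ in total. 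I would also need to transfer the part of the segment with $\sigma < \tfrac12$ to $\sigma > \tfrac12$ via $\zeta(s) = \chi(s)\zeta(1-s)$, which turns $\frac{\zeta'}{\zeta}(s)$ into $\frac{\chi'}{\chi}(s) - \frac{\zeta'}{\zeta}(1-s)$; the term $\frac{\chi'}{\chi}(\sigma+it) = O(\log T)$ by Stirling, and $\frac{\zeta'}{\zeta}(1-\sigma-it)$ with $1-\sigma > \tfrac12$ is handled by the same partial-fraction bound — though in practice it is cleaner just to apply the partial-fraction bound directly across the whole range $-1 \leq \sigma \leq c$ and absorb everything into $O((\log T)^2)$. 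The $\sigma$-integration itself, as sketched above, is entirely routine.
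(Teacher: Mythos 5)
Your proof is correct and follows essentially the same approach as the paper: bound $\zeta'/\zeta$ on the horizontal segment by $O((\log T)^2)$ (the paper simply cites Gonek for this, whereas you sketch the standard partial-fraction/zero-counting argument, which is the same underlying justification), use Stirling for $|\chi|$, and estimate the $\sigma$-integral by its endpoint values using monotonicity — arithmetically identical to the paper's ``maximum of the integrand'' bound. Your preliminary suggestion of splitting at $\Re(s)=\tfrac12$ and reflecting via the functional equation is unnecessary, as you yourself note, and you correctly discard it in favour of the direct partial-fraction bound across the whole range.
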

\begin{proof}
    By Gonek~\cite[p. 126]{Gonek1984}, if $T$ is such that $|T-\gamma|\gg \frac{1}{\log T}$ for any zero ordinate $\gamma$, then the bound
\begin{equation*}
\frac{\zeta '}{\zeta}(\sigma + i T) =O\left( (\log T)^{2}\right)
\end{equation*}
holds uniformly for $-1\leq \sigma \leq 2$. We also have from \eqref{stirling} that
\begin{equation}\label{stirling_absolutevalue}
\left| \chi(\sigma + i T) \right| = \left(\frac{T}{2\pi}\right)^{\frac12 -\sigma} \left(1+O\left(\frac{1}{T}\right)\right) .
\end{equation}

Applying these bounds and trivially estimating the resulting integral by the maximum of the integrand (which is maximised at $\sigma=1-c$ if $X\ll T$, and at $\sigma=c$ if $X \gg T$), we have
\begin{equation*}
I_2 =O\left( (\log T)^2 \left(T^{1/2} \left( \frac{T}{X} \right)^{\frac{1}{\log T}} + \frac{X}{T^{1/2}} \left( \frac{X}{T} \right)^{\frac{1}{\log T}} \right)\right)
\end{equation*}

Now, since $\left(\frac{T}{X}\right)^{\frac{1}{\log T}}<T^\frac{1}{\log T}=e$ we deduce that 
\begin{equation*}
        I_2=O\left(T^{1/2} (\log T)^2\right) + O\left( \frac{X^{1+1/\log T} (\log T)^2}{T^{1/2}}\right)
\end{equation*} 
as required.
\end{proof}

Note that in Lemma \ref{lemtopofcontour} the first term dominates if $X\ll T$ and the second term dominates if $X \gg T$. We see also that these two terms collectively dominate the error coming from restricting $|T-\gamma|\gg\frac{1}{\log T}$. This gives the first two terms in the error, $E(X,T)$, given in \eqref{eq:ErrorTermRH}.

\begin{lemma}\label{lem:RHS}
    The integral along the right-hand side of the contour in \eqref{eq:SumZerosAsCauchyInt} is given by
    \begin{align*}
    I_1 &= \frac{1}{2\pi}\int_{T}^{2T}\frac{\zeta'}{\zeta}(c+it)\chi(c+it)X^{c+it}\ dt \\
    &=-X\sum_{\frac{\pi X}{T} \leq n < \frac{2\pi X}{T}}\frac{\Lambda(n)}{n}e^{\frac{2\pi Xi}{n}}+O\left(\frac{X \log T}{T^{1/2}} \right).
\end{align*}
with the error uniform for $X \geq 1$ and $T>1$.
\end{lemma}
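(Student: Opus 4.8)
The plan is to expand $\frac{\zeta'}{\zeta}(c+it)$ as a Dirichlet series on the line $\Re s = c = 1 + 1/\log T$, interchange summation and integration, and then recognise each resulting integral as $J(c, X/n, T)$ so that Lemma~\ref{lem_JsigmaRAB} can be applied term by term. Concretely, since $\frac{\zeta'}{\zeta}(s) = -\sum_{n\geq 1}\Lambda(n) n^{-s}$ converges absolutely on $\Re s = c$, I would write
\[
I_1 = \frac{1}{2\pi}\int_T^{2T}\frac{\zeta'}{\zeta}(c+it)\chi(c+it)X^{c+it}\,dt = -\frac{X^c}{2\pi}\sum_{n\geq 1}\frac{\Lambda(n)}{n^c}\int_T^{2T}\chi(c+it)\Bigl(\frac{X}{n}\Bigr)^{it}\,dt = -\frac{X^c}{2\pi}\sum_{n\geq 1}\frac{\Lambda(n)}{n^c}\,J\!\Bigl(c,\tfrac{X}{n},T\Bigr).
\]
(The interchange is justified by absolute convergence of the Dirichlet series together with the uniform bound $|\chi(c+it)| \ll T^{1/2-c}$ from \eqref{stirling_absolutevalue} over the finite $t$-range.)

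Next I would invoke Lemma~\ref{lem_JsigmaRAB} with $\sigma = c$ and $r = X/n$. The main term $2\pi r^{1-\sigma}e^{2\pi i r} = 2\pi (X/n)^{1-c} e^{2\pi i X/n}$ is present precisely when $T < 2\pi X/n \leq 2T$, i.e.\ when $\pi X/T \leq n < 2\pi X/T$; for such $n$ the contribution to $I_1$ is $-\frac{X^c}{2\pi}\cdot\frac{\Lambda(n)}{n^c}\cdot 2\pi (X/n)^{1-c} e^{2\pi i X/n} = -X\,\frac{\Lambda(n)}{n}\,e^{2\pi i X/n}$, which assembles exactly into the claimed main sum. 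Note this is a \emph{finite} sum with $O(1 + X/T)$ terms. Everything else is error: I need to show
\[
\frac{X^c}{2\pi}\sum_{n\geq 1}\frac{\Lambda(n)}{n^c}\,\bigl|E(c,\tfrac{X}{n},T)\bigr| \ll \frac{X\log T}{T^{1/2}},
\]
where $E(c,r,T) = O(T^{1/2-c}) + O\bigl(T^{3/2-c}/(|T-2\pi r| + T^{1/2})\bigr) + O\bigl(T^{3/2-c}/(|2T-2\pi r| + T^{1/2})\bigr)$.

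The bulk of the work — and the main obstacle — is bounding this error sum, in particular the two terms with denominators $|T - 2\pi X/n| + T^{1/2}$ and $|2T - 2\pi X/n| + T^{1/2}$, since $n^{-c}$ decays only barely and these denominators can be as small as $T^{1/2}$ for the handful of $n$ near $\pi X/T$ or $2\pi X/T$. For the first, "flat" error term $O(T^{1/2-c})$: summing $\frac{X^c}{2\pi} T^{1/2-c}\sum_n \Lambda(n) n^{-c} = \frac{X^c}{2\pi}T^{1/2-c}\bigl|\frac{\zeta'}{\zeta}(c)\bigr| \ll X^c T^{1/2-c}\log T$, and since $X^c = X\cdot X^{1/\log T} \leq X \cdot e^{(\log X)/\log T}$ and $T^{1/2-c} = T^{-1/2-1/\log T} = T^{-1/2}/e$, this is $\ll X T^{-1/2}\log T$ provided $X \leq T$ (and a slightly more careful version handles $X$ larger, though in the regime where $I_1$ is the genuine main term one has $X > T/\pi$, so $X/n$ ranges over values comparable to $T$ and one only ever sums $n \ll X/T$ terms — I would in fact restrict attention from the outset to $X > T/\pi$, since otherwise $J(c, X/n, T)$ never produces a main term and the lemma's error analysis shows $I_1$ is itself an error term subsumed by the $O(X T^{-1/2}\log T)$ bound). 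For the resonance terms I would split the sum over $n$ according to the size of $|T - 2\pi X/n|$ (dyadically, or into $n$ with $2\pi X/n$ within $T^{1/2}$ of $T$ versus farther), using that consecutive values $2\pi X/n$ are spaced by roughly $2\pi X/n^2 \asymp T^2/X$ when $n \asymp \pi X/T$; hence only $O(1 + X T^{1/2}/T^2) = O(1 + X/T^{3/2})$ values of $n$ land within $T^{1/2}$ of $T$, each contributing $\ll \frac{X^c}{n^c}\Lambda(n) T^{3/2-c}\cdot T^{-1/2} = X T/n \cdot \log n \ll T\log T$ (using $n \asymp X/T$), and the tail contributes a convergent-type sum bounded similarly. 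Reconciling these piece-by-piece bounds with the clean target $O(X T^{-1/2}\log T)$ is the delicate bookkeeping; I expect it to go through because in the relevant regime $X \gg T$ one has $X/T \gg 1$ so $X T^{-1/2}\log T \gg T^{1/2}\log T$, comfortably absorbing the resonance contributions, and the spacing argument prevents more than $O(1)$ genuinely small denominators.
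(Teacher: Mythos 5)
Your overall route matches the paper exactly: expand $\frac{\zeta'}{\zeta}(c+it)$ as a Dirichlet series, interchange summation and integration (justified by absolute convergence and the boundedness of $\chi$ on the finite $t$-range), apply Lemma~\ref{lem_JsigmaRAB} with $\sigma = c$ and $r = X/n$, and read off the main term from the $n$ with $\pi X/T \leq n < 2\pi X/T$. Your treatment of the flat error $O(T^{1/2-c})$ via $\sum \Lambda(n) n^{-c} = -\zeta'/\zeta(c) \ll \log T$ also coincides with the paper's.

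Where you diverge is in the resonance error, and here there is a gap. You correctly compute that each $n$ with $|T - 2\pi X/n| \ll T^{1/2}$ (i.e.\ $n$ near $n_0 := 2\pi X/T$) contributes $\asymp (X/n)^c \Lambda(n) T^{1-c} \asymp T\Lambda(n) \ll T\log T$ to the error sum, and you correctly count $\ll 1 + X/T^{3/2}$ such $n$, giving a resonance total of $\ll T\log T + X T^{-1/2}\log T$. But your final reconciliation declares this absorbed because ``$X T^{-1/2}\log T \gg T^{1/2}\log T$''; that is off by a factor of $T^{1/2}$. The quantity you actually need to absorb is $T\log T$, and $T\log T \leq X T^{-1/2}\log T$ requires $X \geq T^{3/2}$, not merely $X > T/\pi$. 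In the range $T/\pi \leq X < T^{3/2}$ there is a single resonant integer $n$ (e.g.\ $n_0 = 2$ when $X \approx T/\pi$) and it can give a term of order $T\Lambda(n_0)$, which your route does not bound by $O(X T^{-1/2}\log T)$. So the dyadic spacing argument, as sketched, does not close. The paper instead converts the $\Lambda$-weighted sum to the integral $X^c \int_1^\infty n^{-c}\, T^{1/2}\, (|T - 2\pi X/n| + T^{1/2})^{-1}\, dn$ (citing the Chebyshev bound $\sum_{n\leq x}\Lambda(n)\ll x$), substitutes $y = 2\pi X/(Tn)$, and bounds the resulting $y$-integral piecewise over $[0,1/2]$, $[1/2,3/2]$ and $[3/2,\infty)$, each giving $\ll \log T$; this is a genuinely different device from your spacing count, and you would need to replicate some averaging of $\Lambda$ over a long interval rather than bound a single peak term in order to reach the stated error in the range $T/\pi \leq X < T^{3/2}$.
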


\begin{rem*}
This result is non-trivial only for $X \gg T$, otherwise the sum over $n$ is empty.
\end{rem*}

\begin{proof}
Since $c>1$ we are to the right of $\Re(s)=1$ and so we can use the Dirichlet series \begin{equation*}
    \frac{\zeta'}{\zeta}(c+it)=-\sum_{n=1}^{\infty}\frac{\Lambda(n)}{n^{c+it}}.
\end{equation*}
Swapping the sum and integral transforms $I_1$ into 
\begin{equation*}
    I_1=-\frac{1}{2\pi}\sum_{n=1}^{\infty}\Lambda(n)\left(\frac{X}{n}\right)^c \int_{T}^{2T}\left(\frac{X}{n}\right)^{it}\chi(c+it)\ dt.
\end{equation*}

We now apply Lemma \ref{lem_JsigmaRAB} to the integral in this expression, with $\sigma=c$ and $r=X/n$. Since the main term only arises when $2\pi r\in (T,2T]$, we are forced into the regime $\pi X/T \leq n < 2\pi X/T$ and so 
\begin{align*}
    I_1&=-\frac{1}{2\pi}\sum_{n=1}^\infty \Lambda(n) \left(\frac{X}{n}\right)^c J\left(c,\frac{X}{n},T\right) \\
    &=-X\sum_{\frac{\pi X}{T} \leq n< \frac{2\pi X}{T}} \frac{\Lambda(n)}{n} e^{2\pi i X / n}+\sum_{n=1}^\infty \Lambda(n)\left(\frac{X}{n}\right)^c E\left(c,\frac{X}{n},T\right).
\end{align*}

Observe that the first piece occurs as a main term in Theorem \ref{mainthm}. Now we establish an upper bound for the error term,
\begin{multline*}
    \sum_{n=1}^\infty \Lambda(n)\left(\frac{X}{n}\right)^c E\left(c,\frac{X}{n},T\right) = O\left(  X^c \sum_{n=1}^\infty \frac{\Lambda(n)}{n^c} T^{-1/2}  \right) \\
    + O\left(  X^c \sum_{n=1}^\infty \frac{\Lambda(n)}{n^c} \frac{T^{1/2}}{|T-2\pi X/n|+T^{1/2}} \right).
\end{multline*}

Since $c=1+1/\log T$, the series $\sum \Lambda(n)n^{-c}=-\zeta'/\zeta(c) = O(\log T)$ and so we can easily bound the first error term by
\begin{equation*}
   O\left( \frac{X^{1+1/\log T} \log T} {T^{1/2}} \right).
\end{equation*}

For the second error term, using $\sum_{n\leq X} \Lambda(n) \ll X$, this is bounded by
\begin{equation*}
    X^c \int_1^\infty  n^{-1-1/\log T} \frac{T^{1/2}}{|T-2\pi X/n|+T^{1/2}} \, dn \ll  \frac{X}{T^{1/2}} \int_0^{2\pi X / T}  y^{-1+1/\log T} \frac{1}{|1-y|+T^{-1/2}} \, dy
\end{equation*}
coming from letting $y=2\pi X / (T n)$. Splitting into regions around the relevant majorants of the integral, and letting the upper limit be infinity, we can bound this by
\begin{multline*}
\frac{X}{T^{1/2}} \int_0^{1/2} y^{-1+1/\log T} dy + \frac{X}{T^{1/2}} \int_{1/2}^{3/2} \frac{1}{|y-1|+T^{-1/2}} dy + \frac{X}{T^{1/2}} \int_{3/2}^\infty \frac{1}{y^2} dy \\
\ll \frac{X \log T}{T^{1/2}} + \frac{X \log T}{T^{1/2}} + \frac{X}{T^{1/2}} .
\end{multline*}
Note that since 
\[
\int_0^{2\pi/T} y^{-1+1/\log T} dy \gg \log T 
\]
the contribution from $y\approx 0$ is a positive proportion of the total bound, so replacing the upper limit from $2\pi X / T$ with $\infty$ does not cause an overestimate of the total error.
\end{proof}

\begin{lemma}\label{lem_lefthand}
    The integral along the left-hand vertical segment in \eqref{eq:SumZerosAsCauchyInt} is 
    \begin{equation*}
    I_3 = 
    \begin{cases}
    \displaystyle -X\sum_{\frac{T}{2\pi X} < n \leq \frac{T}{\pi X}}\Lambda(n) e^{2\pi i X n} + E'(X,T) & \text{ if } X \leq \frac{T}{2\pi}  \\[1ex]
    \displaystyle X (\log X) e^{2\pi Xi} + E'(X,T) & \text{ if } \frac{T}{2\pi}  < X \le \frac{T}{\pi}  \\[1ex]
    \displaystyle E'(X,T) & \text{ if } X > \frac{T}{\pi} 
    \end{cases}
\end{equation*}
where
\[
E'(X,T) = O(T^{1/2}\log T )  + O\left(\frac{T^{3/2}\log T}{|T - 2\pi X| + T^{1/2}}\right)  + O\left(\frac{T^{3/2}\log T}{|2T - 2\pi X| + T^{1/2}}\right) .
\]
\end{lemma}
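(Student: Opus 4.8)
The plan is to use the functional equation $\zeta(s) = \chi(s)\zeta(1-s)$ to map the left-hand vertical segment $\Re(s) = 1-c$ onto the right-hand line $\Re(s) = c$, where the Dirichlet series for $\zeta'/\zeta$ converges, and then apply Lemma \ref{lem_JsigmaRAB} exactly as in the proof of Lemma \ref{lem:RHS}. Concretely, on $I_3$ I would substitute $s = 1-c+it$ and use the logarithmic derivative of the functional equation, $\tfrac{\zeta'}{\zeta}(s) = \tfrac{\chi'}{\chi}(s) - \tfrac{\zeta'}{\zeta}(1-s)$, together with $\chi(s)\chi(1-s) = 1$. The $\chi'/\chi$ term contributes a lower-order piece: using Stirling, $\tfrac{\chi'}{\chi}(1-c+it) = -\log(t/2\pi) + O(1/t)$, and the resulting integral $\int_T^{2T} (\chi'/\chi)(1-c+it)\chi(1-c+it)X^{1-c+it}\,dt$ is handled by the same stationary-phase estimate (Lemma \ref{lem_JsigmaRAB} with $\sigma = 1-c$, $r = X$, times a slowly varying logarithmic factor, which can be absorbed by partial summation or simply bounded) — this will land in $E'(X,T)$. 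The main term comes from $-\tfrac{\zeta'}{\zeta}(1-s) = \tfrac{\zeta'}{\zeta}(c-it)$ combined with the surviving $\chi(1-c+it) = 1/\chi(c-it)$... wait, more carefully: $\chi(s)X^s = \chi(1-c+it)X^{1-c+it}$ and $\chi(1-c+it)\zeta(1-s) $ appears, so $\tfrac{\zeta'}{\zeta}(s)\chi(s) = \tfrac{\chi'}{\chi}(s)\chi(s) - \tfrac{\zeta'}{\zeta}(1-s)\chi(s)$, and we write $\zeta'/\zeta(1-s) = \zeta'/\zeta(c-it)$ with its Dirichlet series $-\sum_n \Lambda(n) n^{-(c-it)}$.

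After expanding, the key integral becomes $\sum_n \Lambda(n) n^{-c} X^{1-c} \int_T^{2T} \chi(1-c+it)(Xn)^{it}\,dt = \sum_n \Lambda(n) n^{-c} X^{1-c} J(1-c, Xn, T)$. Now Lemma \ref{lem_JsigmaRAB} with $\sigma = 1-c$ and $r = Xn$ fires precisely when $T < 2\pi Xn \le 2T$, i.e. $\tfrac{T}{2\pi X} < n \le \tfrac{T}{\pi X}$, producing main term $2\pi r^{1-\sigma} e^{2\pi i r} = 2\pi (Xn)^c e^{2\pi i Xn}$. Multiplying by $\Lambda(n) n^{-c} X^{1-c} /(2\pi)$ gives $\Lambda(n) X e^{2\pi i X n}$ (up to sign from the orientation $\int_{1-c+2iT}^{1-c+iT}$, which reverses direction and contributes the overall minus), yielding $-X\sum_{T/(2\pi X) < n \le T/(\pi X)} \Lambda(n) e^{2\pi i X n}$. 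This matches the first case. The three cases of the lemma correspond exactly to whether the interval $(\tfrac{T}{2\pi X}, \tfrac{T}{\pi X}]$ contains integers $\ge 2$ (case $X \le T/(2\pi)$), contains only $n=1$ (case $T/(2\pi) < X \le T/\pi$, where $\Lambda(1) = 0$ so the $\Lambda$-sum is empty but — and this is the subtle point — the $n=1$ term must be treated separately since $\Lambda(1)=0$ kills it in the von Mangoldt sum yet the stationary phase main term $2\pi r^{1-\sigma}e^{2\pi i r}$ with $r = X$ does \emph{not} vanish; one recovers it from the $\chi'/\chi$ piece instead, which for $n=1$ gives exactly the $X(\log X)e^{2\pi i X}$ term via $\int_T^{2T}(\chi'/\chi)(1-c+it)\chi(1-c+it)X^{it}\,dt$ picking up a stationary point), and contains no integers at all (case $X > T/\pi$, pure error).

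The error analysis mirrors Lemma \ref{lem:RHS}: the tail $\sum_n \Lambda(n) n^{-c} X^{1-c} E(1-c, Xn, T)$ splits into a piece $O(X^{1-c} T^{-(1/2-(1-c))}\sum \Lambda(n) n^{-c}) = O(X^{1-c} T^{1/2+ \ldots} \log T)$ — note $\sigma = 1-c < 0$ so $T^{1/2-\sigma} = T^{1/2 + 1/\log T} = O(T^{1/2}\log T)$ after absorbing $T^{1/\log T} = e$ — and pieces with the denominators $|T - 2\pi Xn| + T^{1/2}$ and $|2T - 2\pi Xn| + T^{1/2}$, which by the same change of variables $y = 2\pi X n/T$ and splitting near $y \approx 1, 2$ give $O(T^{1/2}\log T)$ plus the two singular terms $O\!\left(\tfrac{T^{3/2}\log T}{|T-2\pi X|+T^{1/2}}\right)$ and $O\!\left(\tfrac{T^{3/2}\log T}{|2T - 2\pi X|+T^{1/2}}\right)$, corresponding to $n=1$ sitting near the edges of the critical range. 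The $\chi'/\chi$ contribution, being $\log(t/2\pi)$ times an integral of the same shape, contributes only an extra $\log T$ which is already present. \textbf{The main obstacle} I anticipate is the careful bookkeeping in the boundary case $T/(2\pi) < X \le T/\pi$: one must show that the genuine main term $X(\log X)e^{2\pi i X}$ emerges, and that it comes from the $\chi'/\chi(s)\chi(s)$ term (with its stationary point at $t \approx 2\pi X$) rather than from the von Mangoldt sum, while simultaneously verifying the $n=1$ contribution to the $\zeta'/\zeta(1-s)$ piece vanishes because $\Lambda(1) = 0$ — getting the constant and the phase right here, and confirming consistency with the error term being the same size as this main term at the jump $X = T/(2\pi)$, is the delicate part.
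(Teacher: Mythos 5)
Your proposal follows the same strategy as the paper: split $I_3$ via the logarithmic derivative of the functional equation into a piece governed by the Dirichlet series for $\tfrac{\zeta'}{\zeta}(c-it)$ (handled by Lemma~\ref{lem_JsigmaRAB} with $\sigma=1-c$, $r=Xn$, giving the $\Lambda$-sum) and a piece governed by $\chi'(1-c+it)$ (handled by $\chi'=-\chi\log(t/2\pi)+O(\cdot)$ plus the stationary phase at $t\approx 2\pi X$, giving $X(\log X)e^{2\pi iX}$ in the middle case), exactly as the paper does through $I_{3,1}$ and $I_{3,2}$. You correctly identify the subtle point that the middle-case main term arises from the $\chi'/\chi$ piece rather than the vanishing $\Lambda(1)$ term; apart from a small typo ($X^{it}$ for $X^{1-c+it}$) and slightly loose phrasing, the argument is the paper's.
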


To approach Lemma \ref{lem_lefthand} we start by taking the logarithmic derivative of functional equation for zeta \eqref{eq:AFE} to obtain
\begin{align*}
    I_3&=-\frac{1}{2\pi}\int_{T}^{2T}\frac{\zeta'}{\zeta}(1-c+it)\chi(1-c+it)X^{1-c+it}\ dt\\
    &=-\frac{1}{2\pi}\int_{T}^{2T}\left(-\frac{\zeta'}{\zeta}(c-it)+\frac{\chi'}{\chi}(1-c+it)\right)\chi(1-c+it)X^{1-c+it}\ dt\\
    &=\frac{1}{2\pi}\int_{T}^{2T}\frac{\zeta'}{\zeta}(c-it)\chi(1-c+it)X^{1-c+it}\ dt - \frac{1}{2\pi}\int_{T}^{2T}\chi'(1-c+it) X^{1-c+it}\ dt\\ 
    &=I_{3,1} + I_{3,2}.
\end{align*}

We shall deal with $I_{3,1}$ and $I_{3,2}$ separately, and it is clear that Lemma \ref{lem_lefthand} follows from Lemmas~\ref{lem_I31} and~\ref{lem_I32}, below.

\begin{lemma}\label{lem_I31}
    Uniformly for $X \geq 1$ and $T>1$ we have
    \begin{align*}
        I_{3,1} &= \frac{1}{2\pi}\int_{T}^{2T} \frac{\zeta'}{\zeta}(c-it)\chi(1-c+it)X^{1-c+it}\ dt\\
        &=-X\sum_{\frac{T}{2\pi X} < n \leq \frac{T}{\pi X}}\Lambda(n)e^{2\pi i X n} + O(T^{1/2}\log T).
    \end{align*}
\end{lemma}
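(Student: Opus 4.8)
\textbf{Proof proposal for Lemma \ref{lem_I31}.}

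The plan is to mirror the treatment of $I_1$ in Lemma \ref{lem:RHS}. Since the $\zeta'/\zeta$ factor in $I_{3,1}$ is evaluated at $c-it$ with $c=1+1/\log T>1$, I can expand it as an absolutely convergent Dirichlet series, $\frac{\zeta'}{\zeta}(c-it) = -\sum_{n=1}^{\infty} \Lambda(n) n^{-c+it}$, and swap the sum with the integral. This recasts $I_{3,1}$ as
\[
I_{3,1} = -\frac{1}{2\pi}\sum_{n=1}^{\infty}\frac{\Lambda(n)}{n^{c}} X^{1-c}\int_{T}^{2T}(Xn)^{it}\chi(1-c+it)\,dt = -\frac{X^{1-c}}{2\pi}\sum_{n=1}^{\infty}\frac{\Lambda(n)}{n^{c}}\, J(1-c, Xn, T),
\]
where $J$ is exactly the integral from Lemma \ref{lem_JsigmaRAB} with parameters $\sigma = 1-c$ and $r = Xn$. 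Note that $-1\le 1-c\le 2$ for $T$ large, so the lemma applies.

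Next I would insert the evaluation of $J(1-c, Xn, T)$ from Lemma \ref{lem_JsigmaRAB}. The main term $2\pi r^{1-\sigma}e^{2\pi i r} = 2\pi (Xn)^{c} e^{2\pi i X n}$ appears precisely when $T < 2\pi X n \le 2T$, i.e.\ when $\frac{T}{2\pi X} < n \le \frac{T}{\pi X}$; for all other $n$ only the error term $E(1-c, Xn, T)$ survives. Collecting the main terms gives
\[
-\frac{X^{1-c}}{2\pi}\sum_{\frac{T}{2\pi X}<n\le\frac{T}{\pi X}} \frac{\Lambda(n)}{n^{c}} \cdot 2\pi (Xn)^{c} e^{2\pi i X n} = -X\sum_{\frac{T}{2\pi X}<n\le\frac{T}{\pi X}}\Lambda(n) e^{2\pi i X n},
\]
which is exactly the claimed main term (and is automatically empty when $X>\frac{T}{2\pi}$). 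It remains to bound the error $-\frac{X^{1-c}}{2\pi}\sum_{n\ge 1}\frac{\Lambda(n)}{n^{c}} E(1-c, Xn, T)$. With $\sigma = 1-c$ one has $T^{1/2-\sigma} = T^{c-1/2}$, so the three pieces of $E$ contribute
\[
X^{1-c}\sum_{n\ge 1}\frac{\Lambda(n)}{n^{c}}\left(T^{c-1/2} + \frac{T^{c+1/2}}{|T-2\pi X n|+T^{1/2}} + \frac{T^{c+1/2}}{|2T-2\pi X n|+T^{1/2}}\right).
\]
The first piece is $\ll X^{1-c} T^{c-1/2}\cdot\frac{-\zeta'}{\zeta}(c) \ll X^{1-c}T^{c-1/2}\log T$; since $X\ge 1$ and $c-1 = 1/\log T$, we have $X^{1-c}\le 1$ and $T^{c-1/2} = e\,T^{1/2}$, giving $O(T^{1/2}\log T)$.

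The main obstacle will be bounding the two sums $\sum_n \frac{\Lambda(n)}{n^c}\frac{T^{c+1/2}}{|T-2\pi Xn|+T^{1/2}}$ (and the analogous one with $2T$), where now the ``resonance'' runs over $n\approx \frac{T}{2\pi X}$ rather than being damped by large $n$ as it was for $I_1$. I would bound $\Lambda(n)\ll\log n\ll \log T$ on the relevant range (or use partial summation with $\sum_{n\le y}\Lambda(n)\ll y$), pull out $n^{-c}\asymp (T/X)^{-c}$ near the resonance, and estimate $\sum_{n}\frac{1}{|T-2\pi Xn|+T^{1/2}}$ by splitting into the $O(1)$ values of $n$ with $|T-2\pi Xn|\le T^{1/2}$ (contributing $\ll T^{1/2}\cdot T^{-1/2} = 1$ each, hence $\ll 1$ total, since consecutive $n$ move $2\pi X\ge 2\pi$ apart) plus a tail $\sum_{k\ge 1}\frac{1}{2\pi X k}\ll \frac{\log(\cdot)}{X}$ over dyadic-type ranges. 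Multiplying back the prefactors $X^{1-c}T^{c+1/2}(T/X)^{-c}\log T = X\,T^{1/2-c}\cdot (X/T)^{-c}\cdot$ wait—more carefully, $X^{1-c}\cdot(T/X)^{-c}\cdot T^{c+1/2} = X^{1-c}X^{c}T^{-c}T^{c+1/2} = X\,T^{1/2}$, and combined with the $\log T$ from $\Lambda$ and the $\asymp 1$ (or $\ll X^{-1}\log T$) from the sum, one lands at $O(T^{1/2}\log T)$ after checking that the $X$ in the prefactor is killed by the $1/X$ spacing gain or, in the regime where the resonance set is nonempty so $X\ll T$, is absorbed since $X\cdot X^{-1} = 1$. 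One must be slightly careful when $X$ is close to $1$ (the $n$-spacing is only $\asymp 1$) versus $X$ large; in both cases the count of near-resonant $n$ times the value $T^{-1/2}$ stays $O(1)$, and the off-resonant tail is a convergent geometric-type sum, so the final bound $O(T^{1/2}\log T)$ holds uniformly. I would also double-check that the $2T$-resonance, which requires $n\approx \frac{T}{\pi X}$, lies in the same range and is handled identically.
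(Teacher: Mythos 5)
Your derivation of the main term is exactly the paper's: expand $\frac{\zeta'}{\zeta}(c-it)$ as an absolutely convergent Dirichlet series, swap sum and integral, and apply Lemma~\ref{lem_JsigmaRAB} with $\sigma=1-c$, $r=Xn$, picking up $-X\Lambda(n)e^{2\pi iXn}$ precisely when $\frac{T}{2\pi X}<n\le\frac{T}{\pi X}$. Where you diverge is the error estimate. The paper bounds the error $\sum_n\frac{\Lambda(n)}{n^c}\frac{T^{3/2}}{|T-2\pi Xn|+T^{1/2}}$ by passing to the integral $\int_1^\infty n^{-c}\frac{T^{3/2}}{|T-2\pi Xn|+T^{1/2}}\,dn$ (invoking $\sum_{n\le N}\Lambda(n)\ll N$), then substitutes $y=2\pi Xn/T$ and handles the resulting integral by splitting at $y\approx 0$ and $y\approx 1$. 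You instead bound $\Lambda(n)\ll\log T$ pointwise and count integers in dyadic annuli around the resonance $n_0=T/(2\pi X)$.

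The gap in your version is in the intermediate regime $T^{1/2}\ll X\lesssim T$. Your ``$1/X$ spacing gain'' argument relies on the near-resonant sum $\sum_{|T-2\pi Xn|\le T^{1/2}}\frac{1}{|T-2\pi Xn|+T^{1/2}}$ being $\ll 1/X$, which holds only when the count of near-resonant $n$ is $\gg T^{1/2}/X\ge 1$, i.e.\ when $X\ll T^{1/2}$. Once $X\gg T^{1/2}$ the resonance window $|n-n_0|\ll T^{1/2}/X$ has length less than one, so the count saturates at $O(1)$ (not $O(T^{1/2}/X)$), the near-resonant sum is only $\ll T^{-1/2}$, and your prefactor $X^{1-c}\cdot(T/X)^{-c}\cdot T^{c+1/2}\cdot\log T\asymp XT^{1/2}\log T$ is no longer cancelled: you land at $O(X\log T)$, which exceeds $O(T^{1/2}\log T)$ for $X>T^{1/2}$. (This is also exactly the regime you wave away with the remark ``absorbed since $X\cdot X^{-1}=1$,'' which is a tautology, not a cancellation.) To close the gap you need either to reproduce the paper's sum-to-integral step, which smooths out the resonance so that an integer $n$ sitting squarely on $n_0$ does not overwhelm the bound, or to treat the near-resonant integers as potential boundary main terms rather than errors. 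As written, your error estimate does not establish the uniform $O(T^{1/2}\log T)$ claim.
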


\begin{proof}[Proof of Lemma \ref{lem_I31}]
By using the Dirichlet series for $\frac{\zeta'}{\zeta}(s)$ we are able to write
\begin{align*}
    I_{3,1} &= \frac{1}{2\pi}\int_{T}^{2T} \frac{\zeta'}{\zeta}(c-it)\chi(1-c+it)X^{1-c+it}\ dt\\
    &=-\frac{1}{2\pi} \int_{T}^{2T} \left(\sum_{n=1}^{\infty}\frac{\Lambda(n)}{n^{c-it}}\right) \chi(1-c+it)X^{1-c+it}\ dt   \\
&= -  \frac{X^{1-c}}{2\pi} \sum_{n=1}^{\infty}\frac{\Lambda(n)}{n^c} \int_{T}^{2T} \chi(1-c+it) (Xn)^{it} \ dt.
\end{align*}

The integral is in a form which may be handled by Lemma \ref{lem_JsigmaRAB} with $\sigma=1-c$ and $r~=~Xn$, where we note that a non-error-term contribution only arises when $2\pi Xn\in (T,2T]$, that is, $\frac{T}{2\pi X} < n \leq \frac{T}{\pi X}$. We therefore obtain
\begin{align*}
    I_{3,1}&=-\frac{X^{1-c}}{2\pi} \sum_{n=1}^\infty \frac{\Lambda(n)}{n^c}J(1-c,Xn,T)\\
    &=-X\sum_{\frac{T}{2\pi X} < n \le\frac{T}{\pi X}}\Lambda(n)e^{2\pi i Xn} + X^{1-c} \sum_{n=1}^\infty \frac{\Lambda(n)}{n^c} E(1-c,Xn,T).
\end{align*}

We now treat the error arising from $I_{3,1}$ in\ similar way to the treatment in Lemma~\ref{lem:RHS}, namely 
\begin{multline*}
    X^{1-c}\sum_{n=1}^\infty \frac{\Lambda(n)}{n^c} E(1-c,Xn,T) = O\left( X^{-1/\log T}\sum_{n=1}^\infty \frac{\Lambda(n)}{n^c} T^{1/2} \right) \\
    +O\left(X^{-1/\log T} \sum_{n=1}^\infty \frac{\Lambda(n)}{n^c}  \frac{T^{3/2}}{|T-2\pi X n|+T^{1/2}} \right).
\end{multline*}

As before, since $c=1+1/\log T$, the Dirichlet series can be evaluated and the first term is
\[
O\left( X^{-1/\log T} T^{1/2} \log T \right)
\]
and the second term can be bounded by
\begin{multline*}
X^{-1/\log T} \int_1^\infty n^{-1-1/\log T} \frac{T^{3/2}}{|T-2\pi X n|+T^{1/2}} \,dn \\
= T^{1/2} \int_{2\pi X/T}^\infty y^{-1-1/\log T} \frac{1}{|1-y|+T^{-1/2}} \,dy
\end{multline*}
where this time we substituted $y=2\pi X n / T$. If $2\pi X / T > 1+\epsilon$ then the integral is $\ll \int_1^\infty y^{-2-1/\log T} dy \ll 1$, and if $X/T = o(1)$ then there will be a contribution from $y\approx 0$ and from $y\approx 1$. The former contributes 
\[
\int_{2\pi X / T}^{1/2} y^{-1-1\-/\log T} dy \ll \log T \left(\frac{2\pi X}{T}\right)^{-1/\log T}  \ll \log T
\]
and the latter contributes 
\[
\int_{1/2}^{3/2} \frac{1}{|1-y|+T^{-1/2}} \,dy \ll \log T.
\]

These can all be combined uniformly into one error, $O(T^{1/2} \log T)$, which is an overestimate by $\log T$ in the case when $X > T^{\log \log T}$, but is a good bound for the size of $X$ required in our applications.
\end{proof}

\begin{lemma}\label{lem_I32}
    We have 
    \begin{align*}
        I_{3,2} &= -\frac{1}{2\pi}\int_{T}^{2T} \chi'(1-c+it) X^{1-c+it}\ dt \\
        &= \begin{cases}
        	X (\log X) e^{2\pi i X} + E'(X,T) & \text{ if } T < 2\pi X \leq 2T \\[1ex]
        	 E'(X,T) & \text{ if } X \leq \frac{T}{2\pi}  \text{ or if } X > \frac{T}{\pi} 
        \end{cases}
    \end{align*}
    with 
    \[
    E'(X,T) = O(T^{1/2} \log T) + O\left(\frac{T^{3/2} \log T}{|T-2\pi X|+T^{1/2}}\right) + O\left(\frac{T^{3/2}\log T}{|2T-2\pi X|+T^{1/2}}\right)
    \]
\end{lemma}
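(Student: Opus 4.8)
The plan is to reduce $I_{3,2}$ directly to Lemma~\ref{lem_JsigmaRAB} by a single integration by parts in $t$. First I would pull out the factor $X^{1-c}$ and use $\frac{d}{dt}\chi(1-c+it)=i\,\chi'(1-c+it)$ to write
\[
\int_{T}^{2T}\chi'(1-c+it)X^{1-c+it}\,dt
= -i\,X^{1-c}\int_{T}^{2T}\Big(\tfrac{d}{dt}\chi(1-c+it)\Big)X^{it}\,dt .
\]
Integrating by parts transfers the derivative onto $X^{it}$: the boundary contributes $-i\big(\chi(1-c+2iT)X^{2iT}-\chi(1-c+iT)X^{iT}\big)$, and the remaining integral $\int_{T}^{2T}\chi(1-c+it)X^{it}\,dt$ is precisely $J(1-c,X,T)$, appearing with coefficient $-\log X$. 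After multiplying by the $-\tfrac1{2\pi}$ in the definition of $I_{3,2}$ this gives
\[
I_{3,2} = \frac{X^{1-c}\log X}{2\pi}\,J(1-c,X,T) + O\!\big(X^{1-c}\,|\chi(1-c+iT)| + X^{1-c}\,|\chi(1-c+2iT)|\big).
\]

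Next I would feed in Lemma~\ref{lem_JsigmaRAB} with $\sigma=1-c$ and $r=X$. Its main term $2\pi X^{1-(1-c)}e^{2\pi iX}=2\pi X^{c}e^{2\pi iX}$ occurs exactly when $T<2\pi X\le 2T$, i.e.\ when $\tfrac{T}{2\pi}<X\le\tfrac{T}{\pi}$; multiplying it by $\tfrac{X^{1-c}\log X}{2\pi}$ collapses it to $X(\log X)e^{2\pi iX}$, the asserted main term, while the error contributes $\tfrac{X^{1-c}\log X}{2\pi}\,E(1-c,X,T)$.

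The one place that needs care---and the only real obstacle---is that the integration by parts injects a factor $\log X$, which a priori threatens to put $\log X$ rather than $\log T$ into the error. The key elementary estimate is that, since $c=1+1/\log T$, we have $X^{1-c}=X^{-1/\log T}\le 1$ for $X\ge 1$, and
\[
X^{1-c}\log X=(\log X)\,e^{-(\log X)/\log T}=(\log T)\,u e^{-u},\qquad u:=\frac{\log X}{\log T}\ge 0 ,
\]
so $X^{1-c}\log X\le (\log T)\max_{u\ge 0}ue^{-u}=(\log T)/e=O(\log T)$ uniformly in $X$. With this, the boundary terms are $O\big(|\chi(1-c+iT)|+|\chi(1-c+2iT)|\big)=O(T^{c-1/2})=O(T^{1/2})$ by \eqref{stirling_absolutevalue}, and the $J$-error contributes $O\big((\log T)\,E(1-c,X,T)\big)$.

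Finally I would evaluate $E(1-c,X,T)$ from Lemma~\ref{lem_JsigmaRAB}: since $1-c=-1/\log T$ we have $T^{1/2-(1-c)}\asymp T^{1/2}$ and $T^{3/2-(1-c)}\asymp T^{3/2}$, so $(\log T)\,E(1-c,X,T)$ is exactly the sum of the three terms $O(T^{1/2}\log T)$, $O\big(\tfrac{T^{3/2}\log T}{|T-2\pi X|+T^{1/2}}\big)$, $O\big(\tfrac{T^{3/2}\log T}{|2T-2\pi X|+T^{1/2}}\big)$ that define $E'(X,T)$. Sorting by whether $T<2\pi X\le 2T$ holds or not (the two complementary cases $X\le\tfrac{T}{2\pi}$ and $X>\tfrac{T}{\pi}$ both leaving only $E'(X,T)$) then yields the stated dichotomy. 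One could instead produce the $\log(t/2\pi)$-weighted integral by differentiating $J(\sigma,X,T)$ in $\sigma$ and applying a Cauchy estimate on a circle of radius $1/\log T$, using $\tfrac{\chi'}{\chi}(\sigma+it)=-\log\tfrac{t}{2\pi}+O(1/t)$ (which follows from \eqref{stirling}); but the integration-by-parts route is cleaner, as it applies Lemma~\ref{lem_JsigmaRAB} verbatim.
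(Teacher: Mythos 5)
Your proposal is correct, and it takes a genuinely different route from the paper's. The paper first writes $\chi'=\chi\cdot\tfrac{\chi'}{\chi}$ and uses Stirling's approximation $\tfrac{\chi'}{\chi}(\sigma+it)=-\log\tfrac{t}{2\pi}+O(1/t)$ to replace $\chi'(1-c+it)$ by $-\chi(1-c+it)\log\tfrac{t}{2\pi}+O(t^{c-3/2})$; it then needs a \emph{variant} of Lemma~\ref{lem_JsigmaRAB} for the $\log\tfrac{t}{2\pi}$-weighted integral (proved by integration by parts, as in Gonek's Lemma~3). You instead integrate by parts up front, using $\chi'(1-c+it)=-i\,\tfrac{d}{dt}\chi(1-c+it)$, which converts the $\chi'$-integral into $-\log X\cdot J(1-c,X,T)$ plus boundary terms, so Lemma~\ref{lem_JsigmaRAB} applies \emph{verbatim}. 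The cost is that the coefficient $\log X$ (rather than $\log T$) appears, and the crucial observation you supply to close the gap --- which is not needed in the paper's version --- is that $X^{1-c}\log X=(\log T)\,ue^{-u}\le(\log T)/e$ uniformly for $X\ge1$, so the error matches $E'(X,T)$. Your boundary-term estimate $X^{1-c}|\chi(1-c+iT)|=O(T^{1/2})$ via \eqref{stirling_absolutevalue} is also correct. Net effect: same conclusion and essentially the same strength of error term, but your route sidesteps the weighted stationary-phase variant at the price of the elementary $ue^{-u}$ optimisation; it is a clean alternative.
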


\begin{proof}[Proof of Lemma \ref{lem_I32}]
For $s=\sigma+it$ with $|\sigma|\le 2$ and $t>1$, we have 
\begin{equation*}
    \frac{\chi'}{\chi}(\sigma+it)=-\log\left(\frac{t}{2\pi}\right)+O\left(\frac{1}{t}\right),
\end{equation*}
so upon multiplying through by $\chi(\sigma+it)$ and using \eqref{stirling_absolutevalue} this yields \begin{equation*}
    \chi'(\sigma+it)=-\chi(\sigma+it)\log\left(\frac{t}{2\pi}\right)+O\left(t^{-1/2-\sigma}\right).
\end{equation*}

In our case we have $s=1-c+it$ and so we can write 
\begin{align*}
    I_{3,2}&=-\frac{1}{2\pi} \int_{T}^{2T} \left(-\chi(1-c+it)\log\left(\frac{t}{2\pi}\right) + O\left(t^{c-3/2}\right)\right)X^{1-c+it}\ dt\\
    &=\frac{1}{2\pi}\int_{T}^{2T} \chi(1-c+it)\log\left(\frac{t}{2\pi}\right) X^{1-c+it}\ dt + O\left(\int_{T}^{2T} X^{1-c}t^{c-3/2} dt\right)
\end{align*}

Upon integration we see that our error term may be bounded as $O(T^{1/2})$
which is no bigger than the error terms already found in $I_{3,1}$. Finally, a variant of Lemma~\ref{lem_JsigmaRAB} with $\sigma=1-c$ and $r=X$ yields

\begin{align*}
    I_{3,2}&=\frac{1}{2\pi} \int_{T}^{2T} \chi(1-c+it) \left(\log\frac{t}{2\pi}\right)X^{1-c+it}\, dt\\
    &= \begin{cases}
    	X (\log X) e^{2\pi i X} + E(1-c,X,T) \log T & \text{ if }  T < 2\pi X \leq 2T \\[1ex]
    	E(1-c,X,T) \log T & \text{ if } X \leq \frac{T}{2\pi}  \text{ or if } X > \frac{T}{\pi} 
    \end{cases}
\end{align*}
where $E(\sigma,r,T)$ is given in Lemma \ref{lem_JsigmaRAB}, and we have simply applied that lemma together with an integration by parts (which is the same argument used by Gonek~\cite[Lemma 3]{Gonek1984}).
\end{proof}

\section{Deduction of Shanks' conjecture}\label{sect:shanks1}
In this section we show how Theorem~\ref{mainthm} can be used to establish a new proof of Theorem~\ref{thm:shanks}, albeit with worse error terms. We begin by recalling the approximate functional equation for $\zeta'(\sigma+it)$ (see \cite[Lemma 1]{con}, for example), which states that if $0<\alpha<1$ and $s=\sigma+it$
\begin{equation}\label{eq:AFE}
    \zeta'(s)=-\sum_{n\le\left(\frac{t}{2\pi}\right)^{\alpha}}\frac{\log n}{n^s}+\chi(s)\sum_{n\le\left(\frac{t}{2\pi}\right)^{1-\alpha}}\frac{\log n - \ell(t)}{n^{1-s}}+O\left(t^{-\alpha/2}\log t\right) + O\left(t^{-(1-\alpha)/2}\log t\right)
\end{equation}
where $\ell(t) = \log\frac{t}{2\pi}$.

Taking the sum over non-trivial zeros $\rho=\frac{1}{2}+i\gamma$ with $0<\gamma\le T$, we have 
\begin{multline} \label{eq:zetaprimeA1A2A3}
    \sum_{0<\gamma\le T}\zeta'(\rho) = -\sum_{0<\gamma\le T}\sum_{n\le\left(\frac{\gamma}{2\pi}\right)^{\alpha}}\frac{\log n}{n^{\rho}}  +\sum_{0<\gamma\le T}\sum_{n\le\left(\frac{\gamma}{2\pi}\right)^{1-\alpha}} \frac{\chi(\rho) \log n}{n^{1-\rho}} \\
    -\sum_{0<\gamma\le T}\sum_{n\le\left(\frac{\gamma}{2\pi}\right)^{1-\alpha}} \frac{\chi(\rho) \ell(\gamma)}{n^{1-\rho}} + O\left( T^{1-\alpha/2} \log T\right) + O\left( T^{1/2+\alpha/2} \log T\right)
\end{multline}
and we label the three sums on the right-hand side $A_1$, $A_2$, and $A_3$ respectively.

We will see that if $0<\alpha<1$ these can all be summed using our results, but first we collect some standard results which we shall need for the evaluation of these sums.
\begin{lemma}\label{lem:partialsummations}
    We have the following asymptotic expansions:
    \begin{enumerate}
    \item \label{PSi} \begin{equation*}
    \sum_{n\le x}\frac{1}{n}=\log x+\gamma_0+O\left(\frac{1}{x}\right),
\end{equation*}
where $\gamma_0$ is Euler's constant.
\item  \label{PSii} \begin{equation*}
    \sum_{n\le x}\frac{\log n}{n}=\frac{1}{2}(\log x)^2+\gamma_1+O\left(\frac{\log x}{x}\right),
\end{equation*}
where the Stieltjes constant $\gamma_1$ is given in \eqref{eq:Laurent}.

\item  \label{PSiii} \begin{equation*}
        \sum_{n\le x}\frac{\Lambda(n)\log n}{n}=\frac{1}{2}(\log x)^2-(\gamma_0^2+2\gamma_1)+O\left(e^{-a\sqrt{\log x}}\right)
        \end{equation*}
        for some $a>0$, where the Stieltjes constants $\gamma_0, \gamma_1$ are given in \eqref{eq:Laurent}.

\item  \label{PSiv} For $C>-1$,
        \begin{equation*}
            \sum_{n\le x}n^{C}\log n=\frac{x^{C+1}\log x}{C+1}-\frac{x^{C+1}}{(C+1)^2}+O(x^{C}\log x).
        \end{equation*}
\item  \label{PSv} For $C>-1$,
        \begin{equation*}
            \sum_{n\le x}n^{C}\Lambda(n)\log n=\frac{x^{C+1}\log x}{C+1}-\frac{x^{C+1}}{(C+1)^2}+O(x^{C+1}e^{-a\sqrt{\log x}}),
        \end{equation*}
         for some $a>0$.
\end{enumerate}
\end{lemma}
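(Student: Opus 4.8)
The plan is to establish each identity by partial summation against a known asymptotic for the relevant summatory function, so in practice the whole lemma reduces to citing standard prime-counting and divisor-sum estimates and then integrating by parts. For \eqref{PSi} and \eqref{PSii} I would start from the trivial estimate $\sum_{n\le x}1 = x + O(1)$ and $\sum_{n \le x} \log n = x\log x - x + O(\log x)$ (the latter from the Euler–Maclaurin formula or Stirling), and apply Abel summation with the weights $1/n$ and $(\log n)/n$ respectively. The constant terms $\gamma_0$ and $\gamma_1$ then appear as the values of the convergent integrals/series defining Euler's constant and the first Stieltjes constant; one recognises them from the Laurent expansion \eqref{eq:Laurent} of $\zeta(s)$ at $s=1$, which is exactly how $\gamma_1$ is defined there. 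These two are entirely classical (see e.g.\ \cite[Ch.~2]{titchmarsh}) and I would state them with a one-line indication of proof.

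For \eqref{PSiv} and \eqref{PSv} the mechanism is the same but cleaner: partial summation of $\log n$ against the weight $n^C$, using $\sum_{n\le x} 1 = x + O(1)$ for \eqref{PSiv} and the Prime Number Theorem in the form $\psi(x) = \sum_{n \le x}\Lambda(n) = x + O\!\left(x e^{-a\sqrt{\log x}}\right)$ for \eqref{PSv}. In both cases one writes $\sum_{n\le x} a(n)\log n$ (with $a(n)=n^C$ or $a(n)=n^C\Lambda(n)$) via Abel summation as an integral $\int_1^x (\log t) \, d(\text{summatory function})$, substitutes the asymptotic for the summatory function, and evaluates $\int_1^x t^C \log t\, dt = \frac{x^{C+1}\log x}{C+1} - \frac{x^{C+1}}{(C+1)^2} + O(1)$, which is where the two explicit main terms come from. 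The condition $C > -1$ is exactly what makes this integral's main term dominate. The error term in \eqref{PSv} is larger because it inherits the PNT error $e^{-a\sqrt{\log x}}$ rather than the $O(1)$ of the elementary count; this is the only place the depth of PNT enters.

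Identity \eqref{PSiii} is the one requiring genuine input: it is a weighted form of the PNT for the sum $\sum_{n\le x}\Lambda(n)(\log n)/n$, and the clean way is partial summation from the sharper statement $\sum_{n\le x}\Lambda(n)\log n = x\log x - x + O\!\left(x e^{-a\sqrt{\log x}}\right)$ (itself a standard consequence of the PNT with classical error term, or of the formula $-\zeta''/\zeta$ having a double pole at $s=1$), against the weight $1/n$. The main term $\frac12(\log x)^2$ drops out of $\int_1^x \frac{\log t - 1 + \log t}{t}\,dt$-type integrals after combining, and the constant $-(\gamma_0^2 + 2\gamma_1)$ is identified by matching against the Laurent expansion of $-\zeta''/\zeta(s)$ at $s=1$: differentiating \eqref{eq:Laurent} twice and taking logarithmic-derivative ratios, the constant term of $-\zeta''/\zeta$ at $s=1$ works out to $\gamma_0^2 + 2\gamma_1$ up to sign, and this pins the additive constant. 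The main obstacle, such as it is, is bookkeeping: making sure the constant in \eqref{PSiii} is correctly $-(\gamma_0^2 + 2\gamma_1)$ and not some other combination of Stieltjes constants — I would verify it by the Laurent-expansion computation just described rather than trusting the partial-summation constant blindly. Everything else is routine Abel summation, so I would present the lemma compactly, proving \eqref{PSv} and \eqref{PSiii} in a little detail and remarking that the rest follow identically.
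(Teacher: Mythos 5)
Your plan matches the paper's proof for parts \eqref{PSi}, \eqref{PSii}, \eqref{PSiv}, and \eqref{PSv} essentially verbatim: \eqref{PSi} and \eqref{PSii} are cited as standard (the paper references Apostol), and \eqref{PSiv}, \eqref{PSv} are handled by partial summation, with \eqref{PSv} inheriting the exponential error from the Prime Number Theorem, exactly as you describe. For \eqref{PSiii} the paper takes a slightly different route --- a single application of Perron's formula to $\bigl(\tfrac{\zeta'}{\zeta}\bigr)'(s)$, picking up the main term and constant from the residue of $\bigl(\tfrac{\zeta'}{\zeta}\bigr)'(s+1)\,\tfrac{x^s}{s}$ at $s=0$ --- whereas you propose Abel summation from $\sum_{n\le x}\Lambda(n)\log n$ and then a separate Laurent-expansion check to pin down the additive constant. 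Either route works.

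However, there is a genuine slip in your Laurent-matching step. The Dirichlet series of $\Lambda(n)\log n$ is $\bigl(\tfrac{\zeta'}{\zeta}\bigr)'(s) = \tfrac{\zeta''}{\zeta}(s) - \bigl(\tfrac{\zeta'}{\zeta}(s)\bigr)^2$, \emph{not} $-\tfrac{\zeta''}{\zeta}(s)$, and the two have different constant terms at $s=1$. From \eqref{eq:Laurent} one finds
\[
\frac{\zeta'}{\zeta}(s) = -\frac{1}{s-1} + \gamma_0 - (\gamma_0^2+2\gamma_1)(s-1) + O\bigl((s-1)^2\bigr),
\qquad
\Bigl(\frac{\zeta'}{\zeta}\Bigr)'(s) = \frac{1}{(s-1)^2} - (\gamma_0^2+2\gamma_1) + O(s-1),
\]
giving the constant $-(\gamma_0^2+2\gamma_1)$ stated in the lemma, whereas
\[
-\frac{\zeta''}{\zeta}(s) = -\frac{2}{(s-1)^3} + \frac{2\gamma_0}{(s-1)^2} - \frac{2(\gamma_0^2+\gamma_1)}{s-1} + \cdots
\]
(wrong pole order, wrong constant: $-2\gamma_0^2-2\gamma_1 \ne -(\gamma_0^2+2\gamma_1)$). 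So if you literally match against $-\zeta''/\zeta$ you will get the wrong constant; you must use $\bigl(\zeta'/\zeta\bigr)'$. Once that substitution is made, your proposal goes through.
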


\begin{proof}We note that parts (\ref{PSi}) and (\ref{PSii}) are standard and can be found in \cite[p.55,70]{apostol} respectively. Part (\ref{PSiii}) is an application of Perron's formula on $\left(\frac{\zeta'}{\zeta}(s)\right)'$. We may obtain (\ref{PSiv}) using partial summation with $a_n=\log n, f(t)=t^C$. From this, another partial summation with $a_n=\Lambda(n)$ and $f(t)=n^C\log n$ establishes (\ref{PSv}).
\end{proof}

We handle $A_1$ first. By swapping the order of summation we may write 
\begin{align*}
    A_1&=-\sum_{0<\gamma\le T}\sum_{n\le\left(\frac{\gamma}{2\pi}\right)^{\alpha}}\frac{\log n}{n^{\rho}}\\
    &=-\sum_{n\le\left(\frac{T}{2\pi}\right)^{\alpha}} \log n \sum_{2\pi n^{1/\alpha}\le\gamma\le T} \frac{1}{n^{\rho}}.
\end{align*}

Using the Landau-Gonek formula, \eqref{eq:SumNToTheRho}, we have 
\begin{align}
   A_1 &= -\sum_{n\le\left(\frac{T}{2\pi}\right)^{\alpha}}\log n\left(- \frac{T}{2\pi n}\Lambda(n) + \Lambda(n)  n^{-1+1/\alpha} + O(\log T \log\log T)\right) \notag\\
   &= \frac{T}{2\pi} \sum_{n\le\left(\frac{T}{2\pi}\right)^{\alpha}} \frac{\Lambda(n) \log n}{n}  - \sum_{n\le\left(\frac{T}{2\pi}\right)^{\alpha}}  n^{-1+1/\alpha}\Lambda(n)\log n  + O\left(T^{\alpha} (\log T)^2 \log\log T\right) \notag\\
   &=\frac{T}{4\pi}\alpha^2\left(\log\frac{T}{2\pi}\right)^2-\frac{T}{2\pi}\alpha^2\left(\log\frac{T}{2\pi}\right)+\frac{T}{2\pi}(-\gamma_0^2-2\gamma_1+\alpha^2) + O\left(T e^{-a\sqrt{\log T}}\right), \label{eq:A1}
\end{align}
having applied Lemma \ref{lem:partialsummations} parts (\ref{PSiii}) and (\ref{PSv}) with the choices $x=\left(\frac{T}{2\pi}\right)^{\alpha}$ and $C=-1+1/\alpha$ (and using that trivially $x^{1/\alpha}=\frac{T}{2\pi}$ and $\log x=\alpha\left(\log\frac{T}{2\pi}\right)$). Note that we need $\alpha<1$ for the claimed error term in the last line (coming from summing the von Mangoldt function) to dominate the error term in the middle line (coming from the Landau--Gonek formula).

Now we turn to $A_2$. We have 
\begin{align}
    A_2 &=\sum_{0<\gamma\le T}\sum_{n\le\left(\frac{\gamma}{2\pi}\right)^{1-\alpha}}\frac{\chi(\rho)\log n}{n^{1-\rho}} \notag\\
    &=\sum_{n\le\left(\frac{T}{2\pi}\right)^{1-\alpha}}\frac{\log n}{n} \sum_{2\pi n^{1/(1-\alpha)}\le\gamma\le T}\chi(\rho)n^{\rho} \notag\\
    &=-\frac{T}{2\pi}\sum_{n\le\left(\frac{T}{2\pi}\right)^{1-\alpha}}\frac{\log n}{n} + \sum_{n\le\left(\frac{T}{2\pi}\right)^{1-\alpha}}n^{-1+1/(1-\alpha)}\log n \notag\\
    &\qquad+ \sum_{n \leq \left(\frac{T}{2\pi}\right)^{1-\alpha}} \left( O\left( (\log n)^2 \right) + O\left( \frac{\log n}{n} T e^{-a\sqrt{\log T}}  \right) \right) \notag\\
    &=-\frac{T}{4\pi}(1-\alpha)^2\left(\log\frac{T}{2\pi}\right)^2+\frac{T}{2\pi}(1-\alpha)^2\left(\log\frac{T}{2\pi}\right)
    -\frac{T}{2\pi}(\gamma_1+(1-\alpha)^2) \notag\\
    &\qquad + O\left( T e^{-a'\sqrt{\log T}}  \right), \label{eq:A2}
\end{align}
where in passing from the first line to the second we have swapped the order of summation, in passing from the second to the third line we have applied Corollary \ref{mainthmwithinteger} (with the unconditional error term). To get from the third line to the fourth, we have applied Lemma \ref{lem:partialsummations} parts (\ref{PSii}) and (\ref{PSiv}) with $x=\left(\frac{T}{2\pi}\right)^{1-\alpha}$ and $C=-1+\frac{1}{1-\alpha}$. Note that we must have $\alpha>0$ for the claimed error term to be dominant when passing from the third to the fourth line.

Finally we turn to $A_3$. We have 
\begin{align*}
    A_3 &=-\sum_{0<\gamma\le T}\sum_{n\le\left(\frac{\gamma}{2\pi}\right)^{1-\alpha}}\chi(\rho)\frac{\log\left(\frac{\gamma}{2\pi}\right)}{n^{1-\rho}}\\
    &=-\sum_{n\le\left(\frac{T}{2\pi}\right)^{1-\alpha}}\frac{1}{n}\sum_{2\pi n^{1/(1-\alpha)}\le\gamma\le T}\chi(\rho)n^{\rho}\log\frac{\gamma}{2\pi} ,
\end{align*}
by swapping the order of summation. 
By Abel summation and Corollary~\ref{mainthmwithinteger} we can easily evaluate this inner sum using
\begin{align*}
\sum_{\gamma\le T}\chi(\rho)n^{\rho}\log \frac{\gamma}{2\pi} &= \int_2^T \log\left(\frac{t}{2\pi}\right) \ dS(n,t) \\
&= \log\frac{T}{2\pi} S(n,T) - \int_2^T \frac{1}{t} S(n,t) \ dt \\
&=- \frac{T}{2\pi} \log \frac{T}{2\pi}+\frac{T}{2\pi} + O\left(T e^{-a \sqrt{\log T}} \right),
\end{align*}
which we obtain simply by substituting in Corollary~\ref{mainthmwithinteger} and simplifying. Note that since $n=O(T^{1-\alpha})$ with $\alpha>0$, the claimed error term is the dominant one of the two in the Corollary.
Therefore, by substituting this expression into the definition of $A_3$ and subtracting the terms with $\gamma\le 2\pi n^{\frac{1}{1-\alpha}}$ we deduce
\begin{align*}
A_3 &= -\sum_{n\le\left(\frac{T}{2\pi}\right)^{1-\alpha}}\frac{1}{n}\left(-\frac{T}{2\pi}\log\frac{T}{2\pi}+\frac{T}{2\pi}+\frac{1}{1-\alpha}n^{\frac{1}{1-\alpha}}\log n-n^{\frac{1}{1-\alpha}} + O\left(T e^{-a \sqrt{\log T}} \right) \right),
\end{align*}
and once again applying Lemma \ref{lem:partialsummations} gives \begin{multline}
    A_3=\frac{T}{2\pi}(1-\alpha)\left(\log\frac{T}{2\pi}\right)^2+\frac{T}{2\pi}(\gamma_0-2(1-\alpha))\left(\log\frac{T}{2\pi}\right)-\frac{T}{2\pi}(\gamma_0+2(\alpha-1))\\
    + O\left(T e^{-a' \sqrt{\log T}} \right) \label{eq:A3}
\end{multline} 
for any $0<a'<a$.

We combine $A_1$, $A_2$ and $A_3$ from \eqref{eq:A1}, \eqref{eq:A2}, and \eqref{eq:A3} respectively into \eqref{eq:zetaprimeA1A2A3}, to obtain 
\begin{multline*}
    \sum_{0<\gamma\le T}\zeta'(\rho) 
    =\frac{T}{4\pi}\left(\log\frac{T}{2\pi}\right)^2+(\gamma_0-1)\frac{T}{2\pi}\left(\log\frac{T}{2\pi}\right)+(1-\gamma_0-\gamma_0^2-3\gamma_1)\frac{T}{2\pi} \\
    + O\left( T e^{-a\sqrt{\log T}}  \right)
\end{multline*}
for some $a>0$. This recovers Theorem \ref{thm:shanks} with the unconditional error term. 

\begin{rem*}
    If we had assumed the Riemann Hypothesis, then the error terms in Lemma~\ref{lem:partialsummations} would be smaller (roughly $x^{1/2}$) and the resulting error terms would look like $T^{\alpha}$ and $T^{1-\alpha}$ times certain powers of logarithms, and thus be optimised when  the approximate functional equation has roughly equal weight in both its pieces, i.e.~when $\alpha=1/2$.
\end{rem*}

\section{Deduction of the generalised Shanks' conjecture}\label{sect:shanks2}
The purpose of this section is to deduce Theorem \ref{higherderivs} from Theorem \ref{mainthm}. This follows essentially the same approach as that in Section \ref{sect:shanks1}. We firstly start by recalling the approximate functional equation for $\zeta^{(\nu)}(\sigma+it)$ (see, for example, Equation (36) of \cite{con}), which says that 
\begin{multline*}
    \zeta^{(\nu)}(s)=(-1)^{\nu}\sum_{n\le\left(\frac{t}{2\pi}\right)^{\alpha}}\frac{(\log n)^{\nu}}{n^s} +\chi(s)\sum_{n\le\left(\frac{t}{2\pi}\right)^{1-\alpha}}\frac{(\log n-\ell(t))^{\nu}}{n^{1-s}} \\+ O\left(t^{-\alpha/2}(\log t)^{\nu+1}\right)+O\left(t^{-(1-\alpha)/2}(\log t)^{\nu+1}\right)
\end{multline*}
for $0<\alpha<1$, where $\ell(t) = \log\frac{t}{2\pi}$.

Before proceeding, we shall need the following lemma which is analogous to Lemma~\ref{lem:partialsummations}.

\begin{lemma}\label{lem:generalised_partial_summations}
    We have the following asymptotic expansions:  For $\nu\ge 0$ and $C>-1$,
    \begin{enumerate}
        \item \label{lemparti}
        \begin{equation*}
            \sum_{n\le x}\frac{\Lambda(n)(\log n)^{\nu}}{n}=\frac{(\log x)^{\nu+1}}{\nu+1}+O((\log x)^{\nu}).
        \end{equation*}
        
        \item \label{lempartii}
        \begin{equation*}
            \sum_{n\le x} \Lambda(n) n^{C} (\log n)^{\nu} = \frac{x^{C+1} (\log x)^\nu}{C+1} + O\left(x^{C+1}(\log x)^{\nu-1}\right).
        \end{equation*}

        \item \label{lempartiii}
        \begin{equation*}
        \sum_{n\le x}\frac{(\log n)^\nu}{n}=\frac{(\log x)^{\nu+1}}{\nu+1}+O((\log x)^\nu)
        \end{equation*}

        \item \label{lempartiv}
        \begin{equation*}
            \sum_{n\le x}n^{C}(\log n)^{\nu} = \frac{x^{C+1} (\log x)^\nu}{C+1} + O\left(x^{C+1}(\log x)^{\nu-1}\right).
        \end{equation*}

    \end{enumerate}
\end{lemma}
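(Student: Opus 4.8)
The plan is to prove all four parts of Lemma~\ref{lem:generalised_partial_summations} by reducing them to the $\nu$-free cases already established (or easily derived) and handling the logarithmic weight either by induction on $\nu$ or by direct partial summation. The unifying observation is that each sum is a weighted count with weight $(\log n)^\nu$ attached to either $1/n$, $n^C$, $\Lambda(n)/n$, or $\Lambda(n) n^C$, and that for $C > -1$ the dominant behaviour of the unweighted sums ($\sum_{n\le x} n^C \sim x^{C+1}/(C+1)$ and $\sum_{n\le x} \Lambda(n) n^C \sim x^{C+1}/(C+1)$ by the prime number theorem, with the analogous statements at $C=-1$ giving a logarithm) means that multiplying by $(\log n)^\nu$ simply raises the power of the logarithm in the main term while the error is controlled by one fewer power of $\log x$.

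Concretely: for part~(\ref{lempartiv}) I would apply Abel/partial summation with $a_n = n^C$ and $f(t) = (\log t)^\nu$, using the known estimate $\sum_{n\le t} n^C = \tfrac{t^{C+1}}{C+1} + O(t^C)$ (for $C>-1$; this is elementary, e.g.\ by comparison with $\int_1^t u^C\,du$). The main term becomes $\tfrac{x^{C+1}}{C+1}(\log x)^\nu - \tfrac{\nu}{C+1}\int_1^x t^C (\log t)^{\nu-1}\,dt$, and the integral is $O(x^{C+1}(\log x)^{\nu-1})$ by integration by parts (or crude bounding), giving the stated result; the $O(t^C)$ boundary/error contributes $O(x^C(\log x)^\nu)$ which is absorbed. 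Part~(\ref{lempartiii}) is the $C=-1$ analogue: with $a_n=1/n$ and $f(t)=(\log t)^\nu$, using $\sum_{n\le t} 1/n = \log t + \gamma_0 + O(1/t)$ from Lemma~\ref{lem:partialsummations}(\ref{PSi}), partial summation gives $\log x \cdot (\log x)^\nu - \nu\int_1^x \tfrac{(\log t)^{\nu-1}}{t}\,dt = (\log x)^{\nu+1} - \tfrac{\nu}{\nu+1}(\log x)^{\nu} + O((\log x)^\nu)$, i.e.\ $\tfrac{(\log x)^{\nu+1}}{\nu+1}+O((\log x)^\nu)$. Alternatively both of these follow immediately by induction on $\nu$: differentiate/integrate the base case, or note $(\log n)^\nu = \nu\int_1^n \tfrac{(\log u)^{\nu-1}}{u}\,du + (\log 1)^\nu$ and swap order of summation, reducing $\nu$ to $\nu-1$.

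For parts~(\ref{lemparti}) and~(\ref{lempartii}), involving the von Mangoldt function, the same partial-summation scheme applies but now fed with prime-number-theorem input rather than elementary estimates. For~(\ref{lempartii}) I would use $\psi(t) = \sum_{n\le t}\Lambda(n) = t + O(t e^{-c\sqrt{\log t}})$ (or even just $\psi(t) = t + o(t)$, or the weaker Chebyshev bound refined appropriately), then partial summation with $a_n = \Lambda(n)$, $f(t) = t^C(\log t)^\nu$: the main term is $\int_1^x t^C (\log t)^\nu\,d t$ up to acceptable errors, and $\int_1^x t^C(\log t)^\nu\,dt = \tfrac{x^{C+1}(\log x)^\nu}{C+1} + O(x^{C+1}(\log x)^{\nu-1})$ again by integration by parts. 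The error from the PNT remainder is $O(x^{C+1}(\log x)^\nu e^{-c\sqrt{\log x}})$, which is swallowed by $O(x^{C+1}(\log x)^{\nu-1})$. Part~(\ref{lemparti}) is the $C=-1$ case: partial summation with the PNT in the form $\psi(t) = t + O(t e^{-c\sqrt{\log t}})$ converts $\sum_{n\le x}\tfrac{\Lambda(n)(\log n)^\nu}{n}$ into $\int_1^x \tfrac{(\log t)^\nu}{t}\,dt + (\text{errors}) = \tfrac{(\log x)^{\nu+1}}{\nu+1} + O((\log x)^\nu)$; equivalently one can combine Lemma~\ref{lem:partialsummations}(\ref{PSiii}) (the $\nu=1$ case, essentially) with induction. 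I do not expect any genuine obstacle here — everything reduces to standard partial summation plus the prime number theorem with classical error term and the elementary integral $\int^x t^C(\log t)^\nu\,dt$ — the only mild care needed is bookkeeping to confirm that in every case the PNT error term and the boundary terms are indeed dominated by the claimed $O$-term with one fewer power of $\log x$, which holds precisely because $C > -1$ keeps the power $x^{C+1}$ genuinely present. If a uniform-in-$\nu$ statement is wanted, induction on $\nu$ is the cleanest route and the implied constants can be tracked explicitly, but for the applications (fixed $\nu$) this is not necessary.
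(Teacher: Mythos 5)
Your proposal is correct and takes essentially the same route as the paper: partial (Abel) summation, fed by the prime number theorem (for parts~(\ref{lemparti}), (\ref{lempartii})) and by elementary estimates/Euler--Maclaurin (for parts~(\ref{lempartiii}), (\ref{lempartiv})), plus the elementary asymptotic $\int_1^x t^C(\log t)^\nu\,dt = \tfrac{x^{C+1}(\log x)^\nu}{C+1}+O(x^{C+1}(\log x)^{\nu-1})$. The paper simply cites Karabulut for part~(\ref{lemparti}) and asserts parts~(\ref{lempartiii}), (\ref{lempartiv}) as standard, whereas you write out the computations (with a couple of harmless exponent typos in the part~(\ref{lempartiii}) display: the integrand $A(t)f'(t)$ is $\sim\nu(\log t)^{\nu}/t$, not $\nu(\log t)^{\nu-1}/t$, and the subtracted term should be $\tfrac{\nu}{\nu+1}(\log x)^{\nu+1}$), but the method and conclusion match.
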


\begin{proof}[Proof of Lemma \ref{lem:generalised_partial_summations}]
Part (\ref{lemparti}) may be found in \cite[p.30]{Karabulut}, from which Part (\ref{lempartii}) follows by partial summation. Parts  (\ref{lempartiii}) and (\ref{lempartiv}) are standard applications of the Euler--Maclaurin summation formula.
\end{proof}

Taking the sum over non-trivial zeros $\rho=\beta+i\gamma$ with $0<\gamma\le T$, we have 
\begin{multline*}
    \sum_{0<\gamma\le T}\zeta^{(\nu)}(\rho) = (-1)^{\nu}\sum_{0<\gamma\le T}\sum_{n\le\left(\frac{\gamma}{2\pi}\right)^{\alpha}}\frac{(\log n)^{\nu}}{n^{\rho}}  \\
    +\sum_{0<\gamma\le T}\sum_{n\le\left(\frac{\gamma}{2\pi}\right)^{1-\alpha}}\frac{\chi(\rho)}{n^{1-\rho}}(\log n-\ell(\gamma))^{\nu}  \\+ O\left(T^{1-\alpha/2}(\log T)^{\nu+1}\right)+O\left(T^{1/2+\alpha/2}(\log T)^{\nu+1}\right)
\end{multline*}
with $0<\alpha<1$. We label the two sums on the right-hand side $B_1$, and $B_2$, respectively.

For the first term $B_1$, we may use the Landau--Gonek formula, \eqref{eq:SumNToTheRho}, and we have
\begin{align*}
    B_1 &= (-1)^{\nu} \sum_{0<\gamma\le T} \sum_{n\le\left(\frac{\gamma}{2\pi}\right)^{\alpha}}\frac{(\log n)^{\nu}}{n^{\rho}}\\
    &= (-1)^{\nu} \sum_{n\le\left(\frac{T}{2\pi}\right)^{\alpha}} (\log n)^{\nu} \sum_{2\pi n^{1/\alpha} \le \gamma\le T} \frac{1}{n^{\rho}}\\
    &= (-1)^{\nu} \sum_{n\le\left(\frac{T}{2\pi}\right)^{\alpha}}(\log n)^{\nu}\left(-\frac{T}{2\pi n}\Lambda(n) + \Lambda(n) n^{-1+1/\alpha} + O(\log T\log\log T)\right)\\
    &= (-1)^{\nu+1}\frac{T}{2\pi}\sum_{n\le\left(\frac{T}{2\pi}\right)^{\alpha}}\frac{\Lambda(n)(\log n)^{\nu}}{n} + (-1)^{\nu} \sum_{n\le\left(\frac{T}{2\pi}\right)^{\alpha}}\Lambda(n)(\log n)^{\nu}n^{-1+1/\alpha}\\
    &\qquad + O\left( T^\alpha (\log T)^{\nu+1} \log\log T\right)
\end{align*}

It follows immediately from part (\ref{lemparti}) of Lemma \ref{lem:generalised_partial_summations} that
\begin{equation*}
     \frac{T}{2\pi}\sum_{n\le\left(\frac{T}{2\pi}\right)^{\alpha}}\frac{\Lambda(n)(\log n)^{\nu}}{n} = \frac{\alpha^{\nu+1}}{\nu+1} \frac{T}{2\pi} \left(\log\frac{T}{2\pi}\right)^{\nu+1}+O\left(T(\log T)^\nu \right).
\end{equation*}
Moreover, from part (\ref{lempartii}) of the same lemma we see that
\begin{equation*}
    \sum_{n\le\left(\frac{T}{2\pi}\right)^{\alpha}}\Lambda(n)(\log n)^{\nu}n^{-1+1/\alpha} = O(T(\log T)^{\nu})
\end{equation*}

Therefore, if $\alpha<1$,
\begin{equation}\label{eq:B1}
B_1=(-1)^{\nu+1} \frac{\alpha^{\nu+1}}{\nu+1} \frac{T}{2\pi} \left(\log\frac{T}{2\pi}\right)^{\nu+1}+O(T(\log T)^{\nu}).
\end{equation}

For $B_2$ we  use the binomial theorem and swap the order of summation to write
\begin{align*}
    B_2&=\sum_{0<\gamma\le T}\sum_{n\le\left(\frac{\gamma}{2\pi}\right)^{1-\alpha}}\frac{\chi(\rho)}{n^{1-\rho}}(\log n-\ell(\gamma))^{\nu}\\
    &=\sum_{j=0}^{\nu}\binom{\nu}{j}(-1)^{\nu-j}\sum_{0<\gamma\le T}\sum_{n\le\left(\frac{\gamma}{2\pi}\right)^{1-\alpha}}\frac{\chi(\rho)(\log n)^j\ell(\gamma)^{\nu-j}}{n^{1-\rho}} \\
    &= \sum_{j=0}^{\nu}\binom{\nu}{j} (-1)^{\nu-j} \sum_{n\le\left(\frac{T}{2\pi}\right)^{1-\alpha}} \frac{(\log n)^j}{n} \sum_{2\pi n^{1/(1-\alpha)}\le\gamma\le T} \chi(\rho) n^{\rho} \ell(\gamma)^{\nu-j}
\end{align*}

We may handle the innermost sum in a similar way as we did for $\nu=1$, to obtain 
\begin{multline*}
    \sum_{2\pi n^{1/(1-\alpha)}\le\gamma\le T}\chi(\rho)n^{\rho}\left(\log\frac{\gamma}{2\pi}\right)^{\nu-j} = \left(\log\frac{T}{2\pi}\right)^{\nu-j} S(n,T) + \frac{ (\log n)^{\nu-j}}{(1-\alpha)^{\nu-j}} S\left(n,2\pi n^{1/(1-\alpha)}\right) \\
    - (\nu-j) \int_{2\pi n^{1/(1-\alpha)}}^T \frac{\left(\log \frac{t}{2\pi} \right)^{\nu-j-1}}{t}  S(n,t) \ dt 
    \end{multline*}
and using Corollary~\ref{mainthmwithinteger} to evaluate $S(n,T)$ we see this is, for $\alpha>0$,
\begin{multline*}
\sum_{2\pi n^{1/(1-\alpha)}\le\gamma\le T}\chi(\rho)n^{\rho}\left(\log\frac{\gamma}{2\pi}\right)^{\nu-j} = -\frac{T}{2\pi}\left(\log\frac{T}{2\pi}\right)^{\nu-j} +  \frac{ n^{1/(1-\alpha)}(\log n)^{\nu-j}}{(1-\alpha)^{\nu-j}} \\  + O\left(T (\log T)^{\nu-j-1}\right)
\end{multline*}

Part (\ref{lempartiii}) of Lemma~\ref{lem:generalised_partial_summations} shows
\[
\sum_{n\le\left(\frac{T}{2\pi}\right)^{1-\alpha}} \frac{(\log n)^j}{n} \frac{T}{2\pi}\left(\log\frac{T}{2\pi}\right)^{\nu-j} = \frac{(1-\alpha)^{j+1}}{j+1} \frac{T}{2\pi} \left(\log\frac{T}{2\pi}\right)^{\nu+1}
\]
and Part (\ref{lempartiv}) shows
\[
\sum_{n\le\left(\frac{T}{2\pi}\right)^{1-\alpha}}  \frac{ n^{-1+1/(1-\alpha)}(\log n)^{\nu}}{(1-\alpha)^{\nu-j}} = O\left(T (\log T)^\nu \right)
\]

We deduce that for $\alpha>0$
\begin{equation*}
    B_2=(-1)^{\nu+1}\sum_{j=0}^{\nu}(-1)^{j}\binom{\nu}{j}\frac{(1-\alpha)^{j+1}}{j+1} \frac{T}{2\pi} \left(\log\frac{T}{2\pi}\right)^{\nu+1}+O(T(\log T)^{\nu}).
\end{equation*}

It follows from the binomial identity that
\[
\sum_{j=0}^{\nu}(-1)^{j}\binom{\nu}{j}\frac{(1-\alpha)^{j+1}}{j+1} = \frac{1-\alpha^{\nu+1}}{\nu+1}
\]

This allows us to simplify $B_2$ as \begin{equation*}
    B_2=(-1)^{\nu+1} \frac{1-\alpha^{\nu+1}}{\nu+1} \frac{T}{2\pi} \left(\log\frac{T}{2\pi}\right)^{\nu+1}+O(T(\log T)^{\nu}).
\end{equation*}

Finally since $\sum_{0<\gamma\le T}\zeta^{(\nu)}(\rho)=B_1+B_2$ and inserting $B_1$ from \eqref{eq:B1}, we see that 
\begin{equation*}
    \sum_{0<\gamma\le T}\zeta^{(\nu)}(\rho) = \frac{(-1)^{\nu+1}}{\nu+1} \frac{T}{2\pi} \left(\log\frac{T}{2\pi}\right)^{\nu+1}+O(T(\log T)^{\nu}),
\end{equation*}
which recovers Theorem \ref{higherderivs}.

\addcontentsline{toc}{chapter}{Bibliography}
\bibliography{references}        
\bibliographystyle{plain}  
\end{document}